\documentclass{amsbook}
\usepackage{all,graphicx}

\newcommand{\bV}{\mathbf{V}}

\begin{document}

\begin{abstract} 

These lecture notes for the 2013 CIME/CIRM summer school
{\em Combinatorial Algebraic Geometry} 
deal with manifestly infinite-dimensional algebraic
varieties with large symmetry groups. So large, in fact, that subvarieties
stable under those symmetry groups are defined by finitely many orbits
of equations---whence the title {\em Noetherianity up to symmetry}. It
is not the purpose of these notes to give a systematic, exhaustive
treatment of such varieties, but rather to discuss a few ``personal
favourites'': exciting examples drawn from applications in algebraic
statistics and multilinear algebra. My hope is that these notes will
attract other mathematicians to this vibrant area at the crossroads
of combinatorics, commutative algebra, algebraic geometry, statistics,
and other applications. \\
\ \\
\noindent
{\bf Acknowledgments} The author was supported by a Vidi grant from the
Netherlands Organisation for Scientific Research (NWO).\\
\ \\
\hfill Eindhoven, Fall 2013\\ 
\hfill Jan Draisma

\end{abstract}

\title{Noetherianity up to symmetry}
\author{Jan Draisma}
\address{
Department of Mathematics and Computer Science, 
Eindhoven University of Technology; 
and Centrum Wiskunde en Informatica, Am\-ster\-dam; The Netherlands
\hfill {\tt j.draisma@tue.nl}
}

\date{Fall 2013}
\maketitle

\tableofcontents

\chapter{Kruskal's Tree Theorem} \label{ch:Kruskal}

All finiteness proofs in these lecture notes are based on a beautiful
combinatorial theorem due to Kruskal. In fact, the special case
of that theorem known as Higman's Lemma suffices for all of those
proofs. But, hoping that Kruskal's Tree Theorem will soon find further
applications in infinite-dimensional algebraic geometry, I have
decided to prove the theorem in its full strength. Original sources
for Kruskal's Tree Theorem and Higman's Lemma are \cite{Kruskal60}
and \cite{Higman52}, respectively. We follow closely the beautiful
proof in \cite{NashWilliams63}. Throughout we use the notation
$\NN:=\{1,2,\ldots\}$, $\ZZ_{\geq 0}:=\{0,1,\ldots\}$, and
$[n]:=\{1,\ldots,n\}$ for $n \in \ZZ_{\geq 0}$. In
particular, we have $[0]=\emptyset$. 

The main concept is that of a well-partial-order on a set $S$. This is
a partial order $\leq$ with the property that for any infinite sequence
$s_1, s_2, \ldots$ of elements of $S$ there exists a pair of indices
$i<j$ with $s_i \leq s_j$. Arguing by contradiction one then proves that
there exists an index $i$ such that $s_j \geq s_i$ holds for infinitely
many indices $j>i$. Take the first such index $i_1$, and retain only
the term $s_{i_1}$ together with the infinitely many terms $s_j$ with
$j>i_1$ and $s_j \geq s_{i_1}$. Among these pick an index $i_2>i_1$ 
in a similar fashion, etc. This leads to the conclusion that in a
well-partially-ordered set any infinite sequence has an infinite ascending
subsequence $s_{i_1} \leq s_{i_2} \leq \ldots$ with $i_1<i_2<\ldots$.

Examples of well-partial-orders are partial orders on finite sets,
and well-orders (which are linear well-partial-orders). If two
sets $S,T$ are both equipped with well-partial-orders, then the
componentwise partial order on the Cartesian product $S \times T$
defined by $(s,t) \leq (s',t')$ if and only if $s \leq s'$ and $t \leq
t'$ is again a well-partial-order. Indeed, in an infinite sequence
$(s_1,t_1),(s_2,t_2),\ldots$ there is an infinite subsequence of indices
where the $s_i$ increase weakly and in that subsequence there exist a
pair of indices $i \leq j$ where in addition to $s_i \leq s_j$ also the
inequality $t_i \leq t_j$ holds.

Repeatedly applying this Cartesian-product construction with all
factors equal to the non-negative integers $\ZZ_{\geq 0}$ one obtains
the statement that the componentwise order on $\ZZ_{\geq 0}^n$ is a
well-partial-order. This fact, known as Dickson's Lemma, can be used
to prove Hilbert's Basis Theorem. In a similar fashion we shall use
Kruskal's Tree Theorem to prove Noetherianity of certain rings up to
symmetry. Before stating and proving Kruskal's Tree Theorem, we first
discuss the following special case.

\begin{lm}
For any well-partial-order on a set $S$ the partial order on the set
of finite multi-subsets of $S$ defined by $A \leq B$ if and only if there
exists an injective map $f:A \to B$ with $a \leq f(a)$ for all $a \in A$
is a well-partial-order.
\end{lm}

\begin{proof}
Suppose that it is not. Then there exists an infinite sequence
$A_1,A_2,\ldots$ of finite multi-subsets of $S$ such that
$A_i \nleq A_j$ for all pairs of indices $i < j$. Such a sequence is called a
bad sequence, and we may assume that it is minimal in the following
sense. First, the cardinality $|A_1|$ of $A_1$ is minimal among all
bad sequences. Second, $|A_2|$ is minimal among all bad sequences
starting with $A_1$, etc. 

As the empty multi-set is smaller than all other
multi-sets, none of the multi-sets $A_i$ is
empty, so we may choose an element $a_i$ from each $A_i$ and define
$B_i:=A_i \setminus \{a_i\}$. There exists an infinite subsequence
$i_1<i_2<\ldots$ where $a_{i_1} \leq a_{i_2} \leq \ldots$. Now
the desired contradiction will follow by considering the sequence
\[ A_1,A_2,\ldots,A_{i_1-1},B_{i_1},B_{i_2},\ldots. \] 
Indeed, no $A_i$
with $i \leq i_1-1$ is less than or equal to $A_j$ with $i< j \leq
i_1-1$. But neither is any $A_i$ with $i \leq i_1-1$ less than or
equal to any $B_j$ with $j \in \{i_1,i_2,\ldots\}$, or else we would
have $A_i \leq B_j \leq A_j$, with the inclusion map witnessing the
second inequality. Finally, no relation $B_i \leq B_j$ holds with $i,j
\in \{i_1,i_2,\ldots\}$ and $i < j$. Indeed, a map $B_i \to B_j$
witnessing that inequality could be extended to a map $A_i \to A_j$
witnessing $A_i \leq A_j$ by mapping $a_i$ to $a_j$. We conclude that the
new sequence is bad, but this contradicts the minimality of $|A_{i_1}|$
among all bad sequences starting with $A_1,\ldots,A_{i_1-1}$.
\end{proof}

The general case of Kruskal's Tree Theorem concerns the set of
(isomorphism classes of) finite, rooted trees whose vertices are labelled
with elements of a fixed partially ordered set $S$. We call such objects
$S$-labelled trees. A partial order on $S$-labelled trees is defined
recursively as follows; see also Figure~\ref{fig:branching}. Suppose that
$T$ is an $S$-labelled tree with root $r$ and suppose that $T$ branches
at $r$ into trees $B_1,\ldots,B_{p}$ whose roots are the children of $r$;
$p=0$ is allowed here, and renders void one of the conditions that follow. We say that $T$ is less than or equal to a second $S$-labelled tree
$T'$ if the latter has a vertex $v$ (not necessarily its root) where $T'$
branches into trees $B'_1,\ldots,B'_{q}$ (rooted at children of $v$),
such that the $S$-label of $v$ is at least that of $r$ and such that
there exists an injective map $\pi$ from $[p]$ into $[q]$ with $B_i
\leq B'_{\pi(i)}$ for all $i$. Unfolding this recursive definition one
finds that the one-dimensional CW-complex given by the tree
$T$ can then be homeomorphically embedded into that of $T'$
in such a way that each vertex $u$ of $T$ gets mapped into a vertex of
$T'$ whose $S$-label is at least that of $u$; see
Figure~\ref{fig:homeo}.

\begin{figure}
\includegraphics[scale=.6]{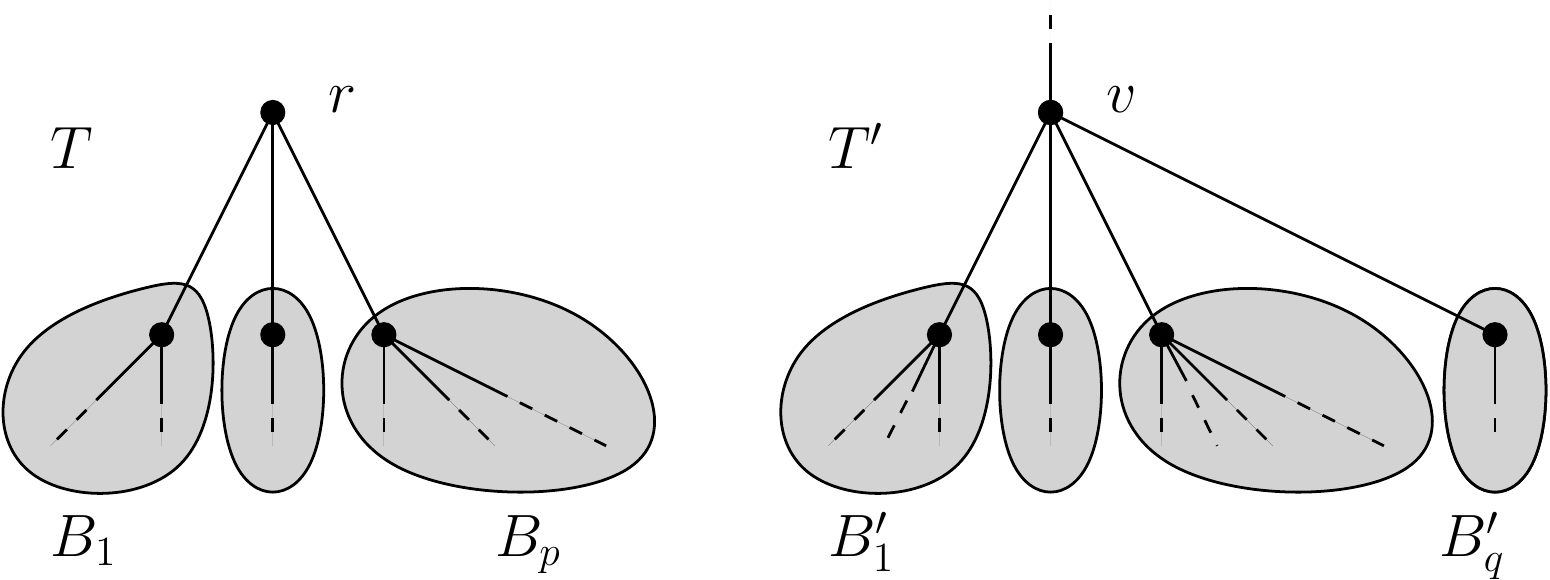}
\caption{If the $S$-label of $r$ is at most that of $v$, and if
$B_{i} \leq B'_{\pi(i)}$ for some injective $\pi:[p] \to [q]$, then $T
\leq T'$.}
\label{fig:branching}
\end{figure}

\begin{figure}
\includegraphics[scale=.6]{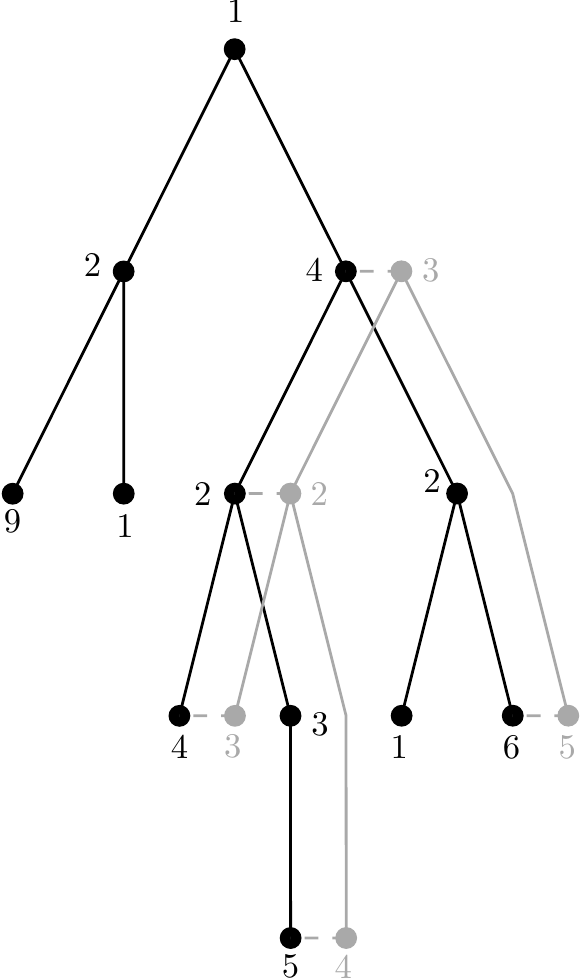}
\caption{The $\NN$-labelled tree in gray is smaller than the
$\NN$-labelled tree in black in Kruskal's order.}
\label{fig:homeo}
\end{figure}

\begin{thm}[Kruskal's Tree Theorem]
For any non-empty well-partially-ordered set $S$, the set of $S$-labelled
trees is well-partially-ordered by the partial order just defined.
\end{thm}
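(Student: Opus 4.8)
My plan is to adapt the minimal-bad-sequence argument from the proof of the Lemma above, the only new feature being that I decompose each tree by cutting it at its root. Suppose, for contradiction, that the order on $S$-labelled trees is not a well-partial-order. Exactly as in the Lemma, I would fix a bad sequence $T_1,T_2,\ldots$ that is minimal in the sense that $T_1$ has the fewest vertices among first terms of bad sequences, then $T_2$ has the fewest vertices among second terms of bad sequences starting with $T_1$, and so on. For each $i$ write $r_i$ for the root of $T_i$, write $\ell_i\in S$ for its label, and write $B_{i,1},\ldots,B_{i,p_i}$ for the trees into which $T_i$ branches at $r_i$. The first thing I would record is that each branch satisfies $B_{i,j}\leq T_i$; more generally, the subtree of $T_i$ hanging from any vertex $v$ is $\leq T_i$, as one sees by using that very vertex $v$ in the definition of the order, together with the identity matching of branches.

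The heart of the argument is the claim that the set $\mathcal{B}:=\{\,B_{i,j} : i\in\NN,\ 1\leq j\leq p_i\,\}$ of all branches that occur, equipped with the induced tree order, is again a well-partial-order. To prove this I would argue by contradiction once more: given a bad sequence $C_1,C_2,\ldots$ in $\mathcal{B}$, write $C_k=B_{i_k,j_k}$, set $m:=\min_k i_k$, and let $k_0$ be the least index with $i_{k_0}=m$ (so $T_m$ has at least one branch). I would then consider the spliced sequence
\[ T_1,T_2,\ldots,T_{m-1},\ C_{k_0},C_{k_0+1},C_{k_0+2},\ldots \]
and check that it is bad: among $T_1,\ldots,T_{m-1}$ there is no comparison because $(T_i)_i$ is bad; no $T_i$ with $i\leq m-1$ is $\leq$ any $C_k$ with $k\geq k_0$, since otherwise $T_i\leq C_k\leq T_{i_k}$ with $i<m\leq i_k$, contradicting badness of $(T_i)_i$; and no $C_k\leq C_l$ holds with $k_0\leq k<l$, because $(C_k)_k$ is bad. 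Since the $m$-th term $C_{k_0}$ of this bad sequence is a proper subtree of $T_m$ and hence has strictly fewer vertices than $T_m$, this contradicts the minimality of the chosen sequence at position $m$. I expect this splicing step---in particular getting the index bookkeeping right so that the new sequence is genuinely bad while being strictly smaller---to be the main obstacle.

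Granting the claim, I would finish quickly. By the Lemma, the finite multi-subsets of $\mathcal{B}$ form a well-partial-order, and then by the Cartesian-product construction from the introduction so does $S\times\{\text{finite multi-subsets of }\mathcal{B}\}$. Applying the defining property of a well-partial-order to the sequence of pairs $(\ell_i,M_i)$, where $M_i$ is the multi-subset $\{B_{i,1},\ldots,B_{i,p_i}\}$ of $\mathcal{B}$ taken with multiplicities, I obtain indices $i<j$ with $\ell_i\leq\ell_j$ and $M_i\leq M_j$; the latter unpacks into an injective map $\pi\colon[p_i]\to[p_j]$ with $B_{i,k}\leq B_{j,\pi(k)}$ for all $k$. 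Taking the vertex $v$ in the definition of the tree order to be the root $r_j$ of $T_j$, whose label $\ell_j$ dominates $\ell_i$, these data witness $T_i\leq T_j$, contradicting the badness of $T_1,T_2,\ldots$. Hence no bad sequence exists, and the order on $S$-labelled trees is a well-partial-order.
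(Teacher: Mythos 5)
Your proof is correct and follows essentially the same route as the paper's: take a minimal bad sequence of trees, show that the set of all branches occurring at roots is well-partially-ordered via a splicing-into-the-minimal-sequence contradiction, and then combine Kruskal's Lemma for multi-subsets with the Cartesian-product step to compare root labels and branch multisets simultaneously. The only cosmetic difference is in the splicing step: the paper first extracts a sub-bad-sequence of branches with strictly increasing tree indices $i_1<i_2<\ldots$ and splices it after $T_1,\ldots,T_{i_1-1}$, whereas you splice the whole tail of the branch sequence after $T_1,\ldots,T_{m-1}$ where $m=\min_k i_k$; both variants are valid, and yours saves the small argument that such a strictly increasing subsequence exists.
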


Note that the lemma is, indeed, a special case of this theorem, obtained
by restricting to star trees with a root (labelled with some fixed,
irrelevant element of $S$) connected directly to all its leaves.

\begin{proof}
The proof is very similar to that of the lemma. Assume, for a
contradiction, the existence of a bad sequence $T_1,T_2,\ldots$,
which we may take minimal in the sense that the cardinality of the
vertex set of $T_i$ is minimal among all bad sequences starting with
$T_1,\ldots,T_{i-1}$. At its root, $T_i$ branches into a finite multi-set
$R_i$ of smaller $S$-labelled trees. Let $R$ be the set-union of all $R_i$
as $i$ runs through $\NN$, so that $R$ is a set of
$S$-labelled trees. If $R$ contained a bad sequence, then
it would contain a bad sequence $B_{i_1},B_{i_2},\ldots$ with $B_{i_j}
\in R_{i_j}$ and $i_1<i_2<\ldots$. Then, as in the proof of the lemma,
one shows that the sequence $T_1,\ldots,T_{i_1-1},B_{i_1},B_{i_2},\ldots$
would be a bad sequence of trees contradicting the minimality of the
original sequence. Hence $R$ is well-partially-ordered.

Consider a subsequence $T_{k_1},T_{k_2},\ldots$ with $k_1<k_2<\ldots$
for which the root labels of the $T_{k_i}$ weakly increase in $S$.
Applying the lemma to the well-partially-ordered set $R$, we find that
there exist $j<l$ such that $R_{k_j} \leq R_{k_l}$. But then $T_{k_j}
\leq T_{k_l}$ (by mapping the root of $T_{k_j}$ to the root of $T_{k_l}$
and $R_{k_j}$ suitably into $R_{k_l}$), a contradiction.
\end{proof}

We now formulate Higman's Lemma, which is a useful direct consequence
of Kruskal's Tree Theorem. Given a partially ordered set $S$, define a
partial order on the set $S^*=\bigcup_p S^p$ of finite sequences over $S$
as follows. A sequence $(s_1,\ldots,s_{p})$ is less than or equal to a
sequence $(s'_1,\ldots,s'_{q})$ if there exists a strictly increasing
map $\pi:[p] \to [q]$ satisfying $s_i \leq s'_{\pi(i)}$ for all $i$.

\begin{thm}[Higman's Lemma]
For any well-partially-ordered set $S$, the partial order on $S^*$ just
defined is a well-partial-order.
\end{thm}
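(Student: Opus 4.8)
The plan is to derive Higman's Lemma as a direct corollary of Kruskal's Tree Theorem by encoding finite sequences over $S$ as $S$-labelled trees of a very restricted shape, namely \emph{paths} (or \emph{caterpillars} with no side-branches). First I would define an injection $\Phi$ from $S^*$ to the set of $S$-labelled trees: to the sequence $(s_1,\ldots,s_p)$ we associate the rooted tree that is a single path $v_1 - v_2 - \cdots - v_p$, where $v_1$ is the root, $v_{i+1}$ is the unique child of $v_i$, and $v_i$ carries the label $s_i$. (The empty sequence requires a little care, since it would correspond to an empty tree; since $S$ is non-empty one can either fix a base point or, more cleanly, work with $S^* \setminus \{\text{empty sequence}\}$ first and then note that prepending/handling the empty sequence separately is harmless, as the empty sequence is below every other sequence.)

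Next I would verify the key compatibility: for sequences $s=(s_1,\ldots,s_p)$ and $s'=(s'_1,\ldots,s'_q)$ we have $s \leq s'$ in $S^*$ \emph{if and only if} $\Phi(s) \leq \Phi(s')$ in Kruskal's order. The forward direction is immediate: a strictly increasing $\pi:[p]\to[q]$ with $s_i \le s'_{\pi(i)}$ yields a homeomorphic embedding of the path $\Phi(s)$ into the path $\Phi(s')$ respecting labels. The reverse direction is where one must be slightly careful, and it is the main point to check: one must argue that any embedding witnessing $\Phi(s) \le \Phi(s')$ in Kruskal's sense, when both trees are paths, is forced to send the path $\Phi(s)$ into a sub-path of $\Phi(s')$ monotonically along the branch, hence induces a strictly increasing label-respecting map $[p]\to[q]$. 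Unfolding the recursive definition, at each branching vertex of a path there is exactly one child-subtree, so the injective map $\pi$ in the definition of Kruskal's order is forced; tracing through the recursion shows the images of $v_1,\ldots,v_p$ occur in order along a single downward path in $\Phi(s')$, which is exactly a strictly increasing map on indices.

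With this bijection-onto-its-image established, the proof concludes quickly: given any infinite sequence $s^{(1)}, s^{(2)}, \ldots$ in $S^*$, apply $\Phi$ to get an infinite sequence of $S$-labelled trees; by Kruskal's Tree Theorem (which applies since $S$ is a non-empty well-partial-order) there exist $i<j$ with $\Phi(s^{(i)}) \le \Phi(s^{(j)})$; by the compatibility just proved, $s^{(i)} \le s^{(j)}$. Hence $S^*$ has no bad sequence, i.e.\ the order on $S^*$ is a well-partial-order. The empty-sequence terms, if present, can be dispatched first: if infinitely many $s^{(i)}$ are empty we are trivially done, and otherwise we may pass to the subsequence of non-empty terms.

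The main obstacle I anticipate is purely in the \emph{reverse} direction of the compatibility statement — making sure that Kruskal's rather flexible notion of embedding (allowing the image of the root to be an arbitrary vertex, not necessarily the root, and allowing the child-subtrees to be matched by an arbitrary injection into a larger set of child-subtrees) collapses, for paths, to the rigid monotone notion used in Higman's order. Once one observes that a path has a single leaf and that each internal vertex has exactly one child, this rigidity is essentially forced, but it is the one step that genuinely uses the special shape of the trees rather than general nonsense. An alternative, self-contained route — mimicking the minimal-bad-sequence argument of the Lemma directly on $S^*$, splitting each sequence as $s_1$ followed by its tail — is also available and of comparable length, but since Kruskal's Tree Theorem has just been proved, the encoding approach is the most economical.
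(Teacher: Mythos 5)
Your proposal is correct and follows exactly the same route as the paper: encode each finite sequence as a path-shaped $S$-labelled tree and invoke Kruskal's Tree Theorem, after checking that Kruskal's order on these path trees agrees with the Higman order on sequences. The paper states this very briefly; your extra care about the reverse direction of the compatibility and about the empty sequence is sound but not a different argument.
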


\begin{proof}
Encode a sequence $(s_1,\ldots,s_{p})$ in $S^*$ as an $S$-labelled tree with
root $1$ labelled $s_1$, with a single child $2$ labelled $s_2$, etc.
Under this encoding the partial order on $S^*$ agrees with that on
$S$-labelled trees, so that Kruskal's theorem implies Higman's Lemma.
\end{proof}

\chapter{Equivariant Gr\"obner bases} \label{ch:Groeb}

Just like, through a leading term argument, Dickson's Lemma implies
Hilbert's Basis Theorem, Higman's Lemma implies the central
finiteness result that all our later proofs build upon. What
follows is certainly not the most general setting, but it will
suffice our purposes.  For much more on this theme see
\cite{Cohen67,Cohen87,Aschenbrenner07,Aschenbrenner08,Drensky06,Hillar09,
Hillar13,LaScala09}.

Let $X$ be a (typically infinite) set of variables, and let $\Mon$ denote
the free commutative monoid of monomials in those variables. Let $\leq$
be a monomial order on $\Mon$, i.e., a well-order that satisfies the
additional condition that $u \leq v \Rightarrow uw \leq vw$ for all
$u,v,w \in \Mon$. Let $\Pi$ be a (typically non-commutative) monoid
acting from the left on $\Mon$ by means of monoid endomorphisms and
assume that the action preserves strict inequalities, i.e., $u < v$
implies $\pi u<\pi v$ for all $\pi \in \Pi$. In particular, $\pi$
acts by means of an injective map on $\Mon$. Moreover, we have $\pi u
\geq u$ since otherwise the sequence $u>\pi u>\pi^2 u>\ldots$ would
contradict that $\leq$ is a well-order.

Let $K$ be a field and denote by $K[X]=K\Mon$ the ring of polynomials
in the variables in $X$, or, equivalently, the monoid algebra over $K$ of
$\Mon$. The action of $\Pi$ on $\Mon$ extends to an action on $K[X]$
by means of ring endomorphisms preserving $1$. For a non-zero $f \in
K[X]$ denote by $\lmon(f) \in \Mon$ the largest monomial with a non-zero
coefficient in $f$. As the action of $\Pi$ preserves the (strict)
monomial order, we have $\lmon(\pi f)=\pi \lmon(f)$ in addition to the
usual properties of $\lmon$. In other words, the map $\lmon$ from $K[X]
\setminus \{0\}$ to $\Mon$ is $\Pi$-equivariant, and this motivates the
terminology in the following definition.

\begin{de}
Let $I$ be a $\Pi$-stable ideal in $K[X]$. Then a $\Pi$-Gr\"obner basis,
or equivariant Gr\"obner basis if $\Pi$ is clear from the context, of $I$
is a subset $B$ of $I$ with the property that for any $f \in I$ there
exists a $g \in B$ and a $\pi \in \Pi$ with $\lmon(\pi g) | \lmon(f)$.
\end{de}

The set $B$ is an equivariant Gr\"obner basis of $I$ if and only if the
union $\Pi B=\{\pi g \mid \pi \in \Pi, g \in B\}$ of the $\Pi$-orbits of
elements of $B$ is an ordinary Gr\"obner basis of $I$ (except that it will
typically not be finite). Then, in particular, $\Pi B$ generates $I$ as
an ideal; and we also say that $B$ generates $I$ as a $\Pi$-stable ideal.

We do not require that an equivariant Gr\"obner basis be finite, but
finite ones will of course be the most useful ones to us. To formulate
a criterion guaranteeing the existence of finite equivariant Gr\"obner
bases we define the $\Pi$-divisibility relation on $\Mon$ by $u |_\Pi v$
if and only if there exists a $\pi \in \Pi$ such that $\pi u$ divides $v$.
This relation is reflexive (take $\pi=1$), transitive (if $v=u' \cdot \pi
u$ and $w=v' \cdot \sigma v$, then $w=(v' \sigma u') \cdot (\sigma \pi)
u$), and antisymmetric (if $\pi u|v$ and $\sigma v|u$ then $u \leq \pi
u  \leq v$ and $v \leq \sigma v  \leq u$ so that $u=v$).

\begin{prop}[\cite{Hillar09}] \label{prop:GroebnerWellPartialOrder}
Every $\Pi$-stable ideal $I \subseteq K[X]$ has a finite $\Pi$-Gr\"obner basis
if and only if $|_\Pi$ is a well-partial-order.
\end{prop}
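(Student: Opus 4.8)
The plan is to prove the two implications separately, recalling first that $|_\Pi$ is already known to be a partial order (from the remarks preceding the proposition) and that in any well-partial-order there can be no infinite strictly descending chain and no infinite antichain, since either would be a bad sequence.

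For the implication ``$\Rightarrow$'', suppose every $\Pi$-stable ideal of $K[X]$ has a finite $\Pi$-Gr\"obner basis, and let $u_1,u_2,\ldots$ be an arbitrary infinite sequence in $\Mon$. I would apply the hypothesis to the $\Pi$-stable ideal $I$ generated by all the $u_i$, which is the monomial ideal generated by $\{\pi u_i \mid \pi\in\Pi,\ i\in\NN\}$, obtaining a finite $\Pi$-Gr\"obner basis $B\subseteq I$. Because $I$ is a monomial ideal, for each $g\in B$ the monomial $\lmon(g)$ is divisible by $\pi_g u_{i_g}$ for some $\pi_g\in\Pi$ and some index $i_g$; finiteness of $B$ makes $N:=\max_{g\in B} i_g$ well-defined. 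Now $u_{N+1}\in I$, so there are $g\in B$ and $\pi\in\Pi$ with $\lmon(\pi g)$ dividing $u_{N+1}=\lmon(u_{N+1})$; since $\lmon(\pi g)=\pi\lmon(g)$ and $\pi_g u_{i_g}$ divides $\lmon(g)$, it follows that $\pi\pi_g u_{i_g}$ divides $u_{N+1}$, i.e.\ $u_{i_g}|_\Pi u_{N+1}$ with $i_g\le N<N+1$. So the sequence is not bad, and $|_\Pi$ is a well-partial-order. (Alternatively one could invoke the fact, noted in the text, that a finite $\Pi$-Gr\"obner basis generates $I$ as a $\Pi$-stable ideal, so that the chain $I_k:=(\Pi u_1,\ldots,\Pi u_k)$ must stabilize; then $I_N=I_{N+1}$ forces $u_i|_\Pi u_{N+1}$ for some $i\le N$.)

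For the implication ``$\Leftarrow$'', suppose $|_\Pi$ is a well-partial-order and let $I\subseteq K[X]$ be $\Pi$-stable. I would study the set of leading monomials $L:=\{\lmon(f)\mid f\in I\setminus\{0\}\}$. The first step is to check that $L$ is an up-set for $|_\Pi$: if $u=\lmon(f)\in L$ and $\pi u$ divides $v$, say $v=w\cdot\pi u$, then $\pi f\in I$ has $\lmon(\pi f)=\pi u$, hence $w\pi f\in I$ has $\lmon(w\pi f)=v$, so $v\in L$. The second step applies the hypothesis: the $|_\Pi$-minimal elements of $L$ form an antichain, hence a finite set $\{u_1,\ldots,u_n\}$, and since $|_\Pi$ has no infinite descending chain, every element of $L$ lies $|_\Pi$-above some $u_k$. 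Finally, choose $g_k\in I$ with $\lmon(g_k)=u_k$ and set $B:=\{g_1,\ldots,g_n\}$; then $B$ is a finite $\Pi$-Gr\"obner basis of $I$, for if $f\in I\setminus\{0\}$ we may pick $k$ and $\pi\in\Pi$ with $\pi u_k$ dividing $\lmon(f)$, and then $\lmon(\pi g_k)=\pi\lmon(g_k)=\pi u_k$ divides $\lmon(f)$, as required.

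Neither direction is technically deep, and the conceptual heart is the ``$\Leftarrow$'' direction: it is precisely the equivariant analogue of the leading-term proof that Dickson's Lemma implies Hilbert's Basis Theorem, with the order-theoretic input ``an up-set in a well-partial-order has finitely many minimal elements, each element lying above one of them'' playing the role of Dickson's Lemma. The points demanding care are the compatibilities $\lmon(\pi f)=\pi\lmon(f)$ and $\lmon(wf)=w\lmon(f)$ — which are what make $L$ an up-set and make the orbit $\Pi B$ behave like a genuine Gr\"obner basis — together with, in the ``$\Rightarrow$'' direction, the observation that the ideal generated by a putative bad sequence of monomials is a monomial ideal, so that every monomial occurring in one of its elements is divisible by a generator.
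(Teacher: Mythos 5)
Your proof is correct and follows essentially the same route as the paper: the ``if'' direction via the finitely many $|_\Pi$-minimal leading monomials of $I$, and the ``only if'' direction via the monomial ideal generated by a putative bad sequence (which the paper dispatches in one sentence and you work out in full). The extra details you supply --- that $L$ is an up-set, that every element of $L$ sits above a minimal one by well-foundedness, and the explicit extraction of the pair $i_g < N+1$ --- are all sound.
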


\begin{proof}
For the ``only if'' part observe that if $u_1,u_2,\ldots$ were a bad
sequence of monomials, then the $\Pi$-stable ideal generated by them,
i.e., the smallest $\Pi$-stable ideal containing them, would not have
a finite equivariant Gr\"obner basis. For the ``if'' part let $I$ be a
$\Pi$-stable ideal in $K[X]$. Let $M$ denote the set of $|_\Pi$-minimal
elements of $\{\lmon(f) \mid f \in I \setminus \{0\}\}$. As $|_\Pi$ is
a well-partial-order, $M$ is finite, say $M=\{u_1,\ldots,u_{p}\}$. Choose
$f_1,\ldots,f_{p} \in I\setminus\{0\}$ with $\lmon(f_i)=u_i$. Then
$\{f_1,\ldots,f_{p}\}$ is a $\Pi$-Gr\"obner basis of $I$.
\end{proof}

The main example that we shall use has $X:=\{x_{ij} \mid i \in [k], j
\in \NN\}$ and $\Pi:=\Inc(\NN)$, the monoid of maps $\NN \to \NN$ that
are strictly increasing in the standard order on $\NN$. This monoid acts
on $X$ by $\pi x_{ij}=x_{i\pi(j)}$; the action extends multiplicatively
to an action on $\Mon$ and linearly to an action by ring endomorphisms
on the polynomial ring $R:=K[X]=K[(x_{ij})_{ij}]$. There exist monomial
orders $\leq$ for which $u<v$ implies $\pi u < \pi v$; for instance,
the lexicographic order with $x_{ij} < x_{i'j'}$ if $i<i'$ or $i=i'$
and $j<j'$.

\begin{thm}[\cite{Cohen87,Hillar09}] \label{thm:kbyNmatrices}
Fix a natural number $k$. Then any $\Inc(\NN)$-stable ideal $I$
in the ring $K[x_{ij} \mid i \in [k], j \in \NN]$ has a finite
$\Inc(\NN)$-Gr\"obner basis with respect to any monomial order preserved
by $\Inc(\NN)$. In particular, any $\Inc(\NN)$-stable ideal $I$ in that
ring is generated, as an ideal, by finitely many $\Inc(\NN)$-orbits
of polynomials.
\end{thm}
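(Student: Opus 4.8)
The plan is to invoke Proposition~\ref{prop:GroebnerWellPartialOrder}: it suffices to show that the $\Inc(\NN)$-divisibility relation $|_\Pi$ on the monoid $\Mon$ of monomials in the $x_{ij}$ ($i \in [k]$, $j \in \NN$) is a well-partial-order. Granting this, a finite $\Inc(\NN)$-Gr\"obner basis of $I$ exists with respect to every monomial order preserved by $\Inc(\NN)$---and such orders exist, for instance the lexicographic one noted above---while a finite equivariant Gr\"obner basis generates $I$ as an ideal, so $I$ is generated by finitely many $\Inc(\NN)$-orbits of polynomials.

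To prove that $|_\Pi$ is a well-partial-order I would encode monomials as finite words and appeal to Higman's Lemma. To a monomial $u = \prod_{i,j} x_{ij}^{e_{ij}} \neq 1$ associate the word $w(u) := (c_1,\ldots,c_N) \in (\ZZ_{\geq 0}^k)^*$, where $N$ is the largest column index $j$ occurring in $u$ and $c_j := (e_{1j},\ldots,e_{kj}) \in \ZZ_{\geq 0}^k$ is the exponent vector of column $j$; put $w(1)$ equal to the empty word. The point of padding with zero vectors all the way up to column $N$---rather than merely listing the nonzero columns---is that an element of $\Inc(\NN)$ can never move a column to the left and never decreases the gap between two columns, and this rigidity is exactly what the padded word records.

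The key claim is that if $w(u) \leq w(v)$ in the Higman order on $(\ZZ_{\geq 0}^k)^*$, where $\ZZ_{\geq 0}^k$ carries the componentwise order, then $u |_\Pi v$; in fact the two conditions are equivalent, but only this direction is needed. Indeed, a strictly increasing map $\sigma\colon [N] \to [N']$ witnessing $w(u) \leq w(v)$ (with $N'$ the length of $w(v)$ and $c'$ its letters) satisfies $c_j \leq c'_{\sigma(j)}$ componentwise for all $j$; since $\sigma(N) \geq N$ one may extend $\sigma$ to a strictly increasing map $\pi\colon \NN \to \NN$, and then $\pi \in \Inc(\NN)$ satisfies $\pi u \mid v$, because the exponent of $x_{i\pi(j)} = x_{i\sigma(j)}$ in $\pi u$ equals $(c_j)_i \leq (c'_{\sigma(j)})_i$, its exponent in $v$.

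Granting the claim the theorem follows immediately: a bad sequence $u_1,u_2,\ldots$ for $|_\Pi$ would, by the claim, push forward to a bad sequence $w(u_1),w(u_2),\ldots$ in $(\ZZ_{\geq 0}^k)^*$, but $\ZZ_{\geq 0}^k$ is a well-partial-order by Dickson's Lemma and hence so is $(\ZZ_{\geq 0}^k)^*$ by Higman's Lemma---a contradiction. Thus $|_\Pi$ is a well-partial-order and Proposition~\ref{prop:GroebnerWellPartialOrder} concludes the argument. The one genuine subtlety is choosing the encoding so that Higman order matches $\Inc(\NN)$-divisibility; once one realizes the words must be padded out to the last occupied column (because $\Inc(\NN)$ can neither shift columns leftward nor compress the gaps between them), the remaining verifications are routine bookkeeping.
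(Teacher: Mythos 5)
Your proposal is correct and follows essentially the same route as the paper: reduce to showing $|_{\Inc(\NN)}$ is a well-partial-order via Proposition~\ref{prop:GroebnerWellPartialOrder}, encode each monomial as the word of its column exponent vectors up to the last occupied column, and combine Dickson's Lemma with Higman's Lemma, extending the strictly increasing map on column indices to an element of $\Inc(\NN)$ to witness divisibility. The only difference is expository emphasis (your remarks on padding and on the converse direction), not mathematical substance.
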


\begin{proof}
By Proposition~\ref{prop:GroebnerWellPartialOrder} it suffices to prove
that $|_{\Inc(\NN)}$ is a well-partial-order. To this end, we shall apply
Higman's Lemma to $S=\ZZ_{\geq 0}^k$ with the componentwise partial order, which
is a well-partial-order by Dickson's Lemma. Encode a monomial $u$ in the
variables $x_{ij}$ as a word $(s_1,\ldots,s_{p})$ in $S^*$ as follows:
$p$ is the largest value of the column index $j$ for which some variable
$x_{ij}$ appears in $u$, and $(s_j)_i$ is the exponent of $x_{ij}$ in $u$.
Now given any sequence $u_1,u_2,\ldots$ of monomials, by Higman's
Lemma there exist indices $m<l$ such that the sequences $s,s'$ encoding
$u_m,u_l$ satisfy $s \leq s'$. This means that there exists a strictly
increasing map $\pi:[p] \to [p']$, with $p,p'$ the lengths of $s,s'$,
such that $s_j \leq s'_{\pi(j)}$ for all $j \in [p]$. Extend $\pi$ in
any manner to a strictly increasing map $\NN \to \NN$. Then the exponent
of any variable $x_{ij}$ in $\pi u_m$ equals $0$ if $j \not \in \pi([p])$
and $(s_{\pi^{-1}j})_i \leq (s'_j)_i$ otherwise. This proves that $\pi
u_m | u_l$, as desired.
\end{proof}

The second statement in Theorem~\ref{thm:kbyNmatrices} has several
consequences. One is that any ascending chain $I_1 \subseteq I_2
\subseteq \ldots$ of $\Inc(\NN)$-stable ideals in $R$ stabilises at
some finite index $n$: $I_n=I_{n+1}=\ldots$; we express this fact by
saying that $R$ is $\Inc(\NN)$-Noetherian. This implies that $R$ is
$\Syminf$-Noetherian, where the group $\Syminf:=\bigcup_{j \in \NN}
\Sym([j])$ is obtained by embedding $\Sym([j])$ into $\Sym([j+1])$ as
the stabiliser of $j+1$ and where $\pi \in \Syminf$ acts on $x_{ij}$ by
$\pi x_{ij}=x_{i\pi(j)}$.  Indeed, the $\Syminf$-orbit of any polynomial
$f$ contains the $\Inc(\NN)$-orbit of $f$, and hence any $\Syminf$-stable
ideal is also $\Inc(\NN)$-stable. Note that one can also replace
the countable group $\Syminf$ by the uncountable group of all permutations
of $\NN$, because the two have exactly the same orbits on $R$.

\begin{ex} \label{ex:NbyNmatrices}
In contrast to these beautiful positive results, consider the set
$X=\{y_{ij} \mid i,j \in \NN\}$ with $\Pi=\Inc(\NN)$-action given by
$\sigma y_{ij}=y_{\sigma(i)\sigma(j)}$. We claim that $|_\Pi$ is not a
well-partial-order. Indeed, consider the sequence of monomials
\[ y_{12}y_{21}, y_{12}y_{23}y_{31}, \ldots
\]
encoding directed cycles on two, three, etc.~vertices. Any $\pi \in
\Inc(\NN)$ maps such a monomial to a monomial representing another
directed cycle of the same length. Since no larger cycle contains a
smaller cycle as a subgraph, this is a bad sequence of monomials. The
same argument shows that $K[X]$ is not $\Syminf$-Noetherian. Similar
counterexamples exist for the action of $\Syminf \times \Syminf$ on $X$
given by $(\pi,\sigma) y_{ij}=y_{\pi(i) \sigma(j)}$.
\end{ex}

We now return to the general setting of equivariant Gr\"obner bases,
without the assumption that $|_\Pi$ is a well-partial-order.
These bases can sometimes be computed by a $\Pi$-equivariant version of
Buchberger's algorithm. The halting criterion in this equivariant
Buchberger algorithm is the following equivariant version of Buchberger's
criterion involving $S$-polynomials.

\begin{prop}[Equivariant Buchberger criterion]
Let $B$ be a subset of a $\Pi$-stable ideal $I$ in $K[X]$. Then $B$ is a
$\Pi$-Gr\"obner basis of $I$ if and only if for all $f,g \in B$ and all
$\sigma,\tau \in \Pi$ the ordinary $S$-polynomial $S(\sigma f,\tau g)$ gives
remainder $0$ upon division by $\Pi B$. 
\end{prop}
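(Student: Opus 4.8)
The plan is to reduce the equivariant Buchberger criterion to the ordinary one applied to the (typically infinite) set $\Pi B$. First I would observe that, by the remark following the definition of equivariant Gr\"obner basis, $B$ is a $\Pi$-Gr\"obner basis of $I$ if and only if $\Pi B$ is an ordinary Gr\"obner basis of $I$. The ordinary Buchberger criterion (in the form valid for possibly infinite generating sets) says that $\Pi B$ is a Gr\"obner basis of $I$ if and only if for every pair $h_1, h_2 \in \Pi B$ the $S$-polynomial $S(h_1, h_2)$ reduces to $0$ upon division by $\Pi B$. The only subtlety on this side is that one is working with an infinite set; but the standard proof of Buchberger's criterion goes through verbatim, since each individual $S$-polynomial reduction and the key ``cancellation of leading terms'' argument only ever involve finitely many elements of the generating set at a time, and $\leq$ is still a well-order so every reduction terminates.

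Next I would note that every element $h$ of $\Pi B$ is of the form $\sigma f$ for some $\sigma \in \Pi$ and $f \in B$, so the pairs $(h_1, h_2)$ with $h_i \in \Pi B$ are exactly the pairs $(\sigma f, \tau g)$ with $f, g \in B$ and $\sigma, \tau \in \Pi$. Hence the ordinary criterion applied to $\Pi B$ becomes precisely the condition in the statement: $S(\sigma f, \tau g)$ reduces to $0$ modulo $\Pi B$ for all $f, g \in B$ and all $\sigma, \tau \in \Pi$. Combining this with the equivalence from the previous paragraph gives both directions of the proposition at once. For the ``only if'' direction one could alternatively argue directly: if $B$ is a $\Pi$-Gr\"obner basis then $\Pi B$ is a Gr\"obner basis, so in particular every $S$-polynomial $S(\sigma f, \tau g) \in I$ has its leading monomial divisible by $\lmon(\pi b)$ for some $b \in B$, $\pi \in \Pi$, which lets one start a reduction; iterating and using well-foundedness of $\leq$ drives the remainder to $0$.

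The main thing to get right — and the only real obstacle — is the justification that Buchberger's criterion holds for an infinite generating set, since $\Pi B$ is generally infinite. The usual textbook proof assumes finitely many generators in order to write a given element of $I$ as a finite $K[X]$-linear combination of them; here one must observe that any fixed $f \in I$ is still a \emph{finite} linear combination of elements of $\Pi B$, so the ``take a representation whose maximal leading monomial is minimal'' argument applies to that finite sub-multiset exactly as usual, and the cancellation step replaces a cancelling pair by a sum of $S$-polynomial reductions, each again a finite combination. One must also check that the remainder-on-division operation is well defined modulo the infinite set $\Pi B$: this follows because $\leq$ is a well-order, so the chain of monomials produced by successive reductions strictly decreases and hence terminates. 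Once these bookkeeping points are in place, the proposition is immediate from the translation between $B$ and $\Pi B$.
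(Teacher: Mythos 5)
Your proof is correct and takes essentially the same approach as the paper: reduce to the ordinary Buchberger criterion applied to $\Pi B$, noting that the standard arguments for division-with-remainder and Buchberger's criterion still go through---despite the infinitude of both $X$ and $\Pi B$---because the monomial order remains a well-order. The only difference is one of emphasis: the paper highlights the passage from finitely to infinitely many variables, while you concentrate on the infinitude of the generating set $\Pi B$, but both subtleties are resolved by the same well-ordering and ``each step only touches finitely many elements'' observations you spell out.
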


This criterion follows immediately from the ordinary Buchberger criterion
applied to $\Pi B$---indeed, while most textbooks assume a finite number
of variables, division-with-remainder and Buchberger's criterion apply
to infinitely many variables as well; the crucial ingredient is the
fact that the monomial order is a well-order. Unfortunately, since $\Pi$
is typically infinite, checking whether $B$ is an equivariant Gr\"obner
basis using the equivariant Buchberger criterion may be an infinite task,
even when $B$ is finite. But in many cases of interest this task can be
reduced to a finite task as follows.

Assume, first, that for any two polynomials $f,g \in K[X]$ the Cartesian
product $\Pi f \times \Pi g$ of the $\Pi$-orbits of $f$ and $g$ is the
union of finitely many diagonal orbits $\Pi (\sigma_i f, \tau_i g)=\{(\pi
\sigma_i f, \pi \tau_i g) \mid \pi \in \Pi\},\ i=1,\ldots,r$, where $r \in \NN$
and $\sigma_1,\tau_1,\ldots,\sigma_{r},\tau_{r} \in \Inc(\NN)$ are
allowed to depend on $f,g$. Then we would like to check only whether the
$S$-polynomials $S(\sigma_i f,\tau_i g)$ reduce to zero upon division by
$\Pi B$, and conclude that all $S(\sigma f,\tau g)$ reduce to zero. For
this we would like that $S(\pi \sigma_i f, \pi \tau_i g)=\pi S(\sigma_i
f,\tau_i g)$, because letting $\pi$ act on the reduction
of $S(\sigma_i f,\tau_i g)$ to zero yields a reduction of $S(\pi \sigma_i
f,\pi \tau_i g)$ to zero. This desired $\Pi$-equivariance of $S$-polynomials
does not follow from the assumptions so far, but it does follow if we
make the further assumption that each $\pi \in \Pi$ preserves least
common multiples, i.e., that $\lcm(\pi u,\pi v)=\pi \lcm(u,v)$ for all
$u,v \in \Mon$. This is, in particular, the case if $\pi$ maps variables
to variables.

\begin{thm}
Assume that Cartesian products $\Pi f \times \Pi g$ of $\Pi$-orbits on
$K[X]$ are unions of finitely many diagonal $\Pi$-orbits, and assume
that $\Pi$ preserves least common multiples of monomials. Let $S$
be a finite subset of $K[X]$ and consider the following algorithm:
\begin{enumerate}
\item Set $B:=S$ and $P:=\binom{S}{2} \cup \{(f,f) \mid f
\in S\}$, where $\binom{S}{2}$ is the set of pairs of
distinct elements from $S$;
\item If $P=\emptyset$, then stop, otherwise pick $(f,g) \in P$ and remove
it from $P$.
\item Choose $r \in \NN,\ \sigma_1,\tau_1,\ldots,\sigma_{r},\tau_{r} \in \Pi$ such that
$\Pi f \times \Pi g=\bigcup_{i=1}^{r} \Pi (\sigma_i f,\tau_i g)$. 
\item For each $i=1,\ldots,r$ do the following: 
reduce $S(\sigma_i f,\tau_i g)$ modulo
$\Pi B$, and if the remainder $h$ is non-zero, then add $h$ to $B$ and
consequently add $B \times \{h\}$ to $P$. 
\item Return to step 2.
\end{enumerate}
If and when this algorithm terminates, then $B$ is a $\Pi$-Gr\"obner
basis for the $\Pi$-stable ideal generated by $S$. Moreover, if $|_\Pi$ is
a well-partial-order, then this algorithm does terminate.
\end{thm}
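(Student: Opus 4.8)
The plan is to establish the two assertions separately: \emph{partial correctness}, that if the algorithm halts then its output $B$ is a $\Pi$-Gr\"obner basis of the $\Pi$-stable ideal $I$ generated by $S$, and \emph{termination}, under the hypothesis that $|_\Pi$ is a well-partial-order. Correctness will be reduced to the Equivariant Buchberger criterion applied to the final $B$; termination will come from extracting a bad sequence of leading monomials from a hypothetical infinite run.

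For partial correctness I would first record two invariants maintained throughout the run. First, every polynomial ever placed in $B$ lies in $I$: this holds for $B=S$, and if $h$ is the nonzero remainder of $S(\sigma_i f,\tau_i g)$ modulo $\Pi B$ with $f,g\in B\subseteq I$, then $\sigma_i f,\tau_i g\in I$ because $I$ is $\Pi$-stable, hence $S(\sigma_i f,\tau_i g)\in I$ and $h\in I$. Since moreover $S\subseteq B$, the final $B$ generates $I$ as a $\Pi$-stable ideal, equivalently $\Pi B$ generates $I$ as an ordinary ideal. Second, at every moment $P$ contains every pair $(f,g)\in B\times B$ (allowing $f=g$) that has not yet been selected in step 2: step 1 inserts all pairs over $S$, and step 4 inserts $B\times\{h\}$ each time $B$ grows by $h$. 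Consequently, when the algorithm halts with $P=\emptyset$, every pair over the final $B$ has been processed. Now fix $f,g$ in the final $B$ and $\sigma,\tau\in\Pi$; I must show $S(\sigma f,\tau g)$ reduces to $0$ modulo $\Pi B$. When $(f,g)$ was processed, with orbit representatives $\sigma_1,\tau_1,\ldots,\sigma_r,\tau_r$, each $S(\sigma_i f,\tau_i g)$ was reduced modulo $\Pi B'$ for the then-current $B'\subseteq B$, with remainder either $0$ or some $h$ that was immediately added to $B$. In both cases $S(\sigma_i f,\tau_i g)$ reduces to $0$ modulo $\Pi B$, since $\Pi B'\subseteq\Pi B$ and, in the second case, one further step using $h\in B$ itself kills $h$. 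Finally I would use that $\Pi$ preserves least common multiples and the monomial order to check the $\Pi$-equivariance $S(\pi u,\pi v)=\pi\,S(u,v)$; since $\Pi f\times\Pi g=\bigcup_i\Pi(\sigma_i f,\tau_i g)$, there are $i$ and $\pi\in\Pi$ with $(\sigma f,\tau g)=(\pi\sigma_i f,\pi\tau_i g)$, so $S(\sigma f,\tau g)=\pi\,S(\sigma_i f,\tau_i g)$. Applying the ring endomorphism $\pi$ to the reduction of $S(\sigma_i f,\tau_i g)$ to $0$ modulo $\Pi B$---using that $\pi$ maps $\Pi B$ into $\Pi B$ and commutes with $\lmon$---produces a reduction of $S(\sigma f,\tau g)$ to $0$ modulo $\Pi B$. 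The Equivariant Buchberger criterion then yields that $B$ is a $\Pi$-Gr\"obner basis of $I$.

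For termination, note first that each pass through steps 2--5 is finite: a choice in step 3 exists by hypothesis, and each division-with-remainder in step 4 halts because the monomial order is a well-order, even though $\Pi B$ is typically infinite. Hence if the algorithm ran forever, $B$ would have to grow infinitely often, since once $B$ stabilises no pairs are ever added to the finite set $P$, which then empties after finitely many passes. Let $h_1,h_2,\ldots$ be the polynomials added to $B$ in order and let $B_n$ be the value of $B$ just before $h_n$ is added, so $h_1,\ldots,h_{n-1}\in B_n$. Being a nonzero remainder modulo $\Pi B_n$, no monomial of $h_n$---in particular not $\lmon(h_n)$---is divisible by $\lmon(\pi b)=\pi\,\lmon(b)$ for any $b\in B_n$ and $\pi\in\Pi$; that is, $\lmon(b)\nmid_\Pi\lmon(h_n)$ for all $b\in B_n$. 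Taking $b=h_m$ with $m<n$ shows $\lmon(h_1),\lmon(h_2),\ldots$ is a bad sequence for $|_\Pi$, contradicting the assumption. Hence the algorithm terminates.

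The step needing the most care is the bookkeeping inside partial correctness: one must be careful that a reduction carried out against a \emph{partial} $\Pi B'$ still counts as a reduction against the \emph{final} $\Pi B$, and then combine this with $\Pi$-equivariance of $S$-polynomials to pass from the finitely many checked pairs $(\sigma_i f,\tau_i g)$ to all pairs $(\sigma f,\tau g)$. The termination argument, by contrast, is a direct echo of the leading-term argument deriving Hilbert's Basis Theorem from Dickson's Lemma, now with $|_\Pi$ in place of the componentwise order.
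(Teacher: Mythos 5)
Your proposal is correct and follows the same route as the paper: reduce partial correctness to the Equivariant Buchberger criterion, using $\Pi$-equivariance of $S$-polynomials (from lcm-preservation) to pass from the finitely many representatives $(\sigma_i f,\tau_i g)$ to all $(\sigma f,\tau g)$, and obtain termination by extracting a bad sequence $\lmon(h_1),\lmon(h_2),\ldots$ for $|_\Pi$ from a hypothetical infinite run. The paper dispatches the correctness half in one line (``follows from the ordinary Buchberger criterion''), so your version merely spells out the invariants (everything added lies in $I$; at halt every pair has been processed; reductions against a partial $\Pi B'$ remain reductions against the final $\Pi B$) that are left implicit there.
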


As argued above, all but the last sentence of this theorem follows from
the ordinary Buchberger criterion. The last sentence follows as for the
ordinary Buchberger algorithm: if the algorithm does not terminate, then
an infinite number of non-zero remainders $h_1,h_2,\ldots$ are added. If
$|_\Pi$ is a well-partial-order, then there exist $i<j$ with $\lmon(h_i)
|_\Pi \lmon(h_j)$, which means that $h_j$ was not reduced with respect
to $h_i$, a contradiction.

One point to stress is that in initialising the pair set $P$ also pairs
of two identical polynomials $(f,f), f \in S$ need to be added; and that
similarly, when adding a remainder $h$ to $B$, also the pair $(h,h)$ needs
to be added to $P$. Indeed, already the $\Pi$-stable ideal generated by
a single polynomial can be interesting, as the following example shows.

\begin{ex} \label{ex:OneFactor}
Let $X=\{x_i \mid i \in \NN\} \cup \{y_{ij} \mid i,j \in \NN, i>j\}$
and let $\Pi=\Inc(\NN)$ act on $X$ by $\pi x_i=x_{\pi(i)}$ and $\pi
y_{ij}=y_{\pi(i)\pi(j)}$.  Set $S:=\{y_{21}-x_2x_1\}$ and let $I$ denote
the $\Pi$-stable ideal generated by $S$. We would like to compute an
equivariant Gr\"obner basis of the elimination ideal $I \cap K[y_{ij}
\mid i>j]$. To this end, we choose the lexicographic monomial order with
$x_1<x_2<\ldots$ and $y_{ij} < y_{kl}$ if either $i < k$ or $i=k$ and
$j < l$, and with $y_{kl}<x_i$ for all $i,k,l$. Note that $\Pi$ preserves the
strict monomial order and least common multiples. 

To apply the equivariant Buchberger algorithm we must further check that
Cartesian products of $\Pi$-orbits on $K[X]$ are finite unions of diagonal
$\Pi$-orbits. For this, let $f,g$ be elements of $K[X]$ and let $p,q$
be such that all variables in $f,g$ have indices contained in $[p],[q]$,
respectively. Then $\sigma f,\tau g$ depend only on the restrictions of
$\sigma$ and $\tau$ to $[p],[q]$ respectively. Enumerate all (finitely
many) pairs $(\sigma_i:[p] \to \NN,\tau_i:[q] \to \NN),\ i=1,\ldots,r$ of
increasing maps for which the union of $\im(\sigma_i)$ and $\im(\tau_i)$
equals some interval $[t]=\{1,\ldots,t\}$, necessarily with $t \leq
p+q$. Extend these $\sigma_i$ and $\tau_i$ arbitrarily to elements of
$\Pi$. We claim that for any pair $\sigma,\tau \in \Pi$ we have $(\sigma
f,\tau g)=(\pi\sigma_i f,\pi\tau_i g)$ for some $i \in [r]$ and some $\pi
\in \Pi$. Indeed, there exists a unique $i$ for which there exists an
(again, unique) increasing map $\pi:[t]=\im(\sigma_i) \cup \im(\tau_i) \to
\sigma([p]) \cup \tau([q])$ such that the restrictions of $\sigma,\tau$ to
$[p],[q]$ equal the restrictions of $\pi \circ \sigma_i,\pi \circ \tau_i$
to $[p],[q]$, respectively. Extend $\pi$ in any manner to an element of
$\Pi$ and we find $(\sigma f,\tau g)=(\pi \sigma_i f, \pi \tau_i g)$,
as desired.

This means that we can apply the equivariant Buchberger algorithm,
but without the guarantee that it terminates, since $|_\Pi$ is not a
well-partial-order (adapt Example~\ref{ex:NbyNmatrices} to see this). It turns out the
algorithm does terminate, though, and yields the following equivariant
Gr\"obner basis (after self-reduction): 
\begin{align*}
B=&
\{
x_1 x_2 - y_{21}, \\
&x_3 y_{21} - x_2 y_{31},\ 
x_3 y_{21} - x_1 y_{32},\  
x_2 y_{31} - x_1 y_{32},\\
&x_1^2 y_{32} - y_{31} y_{21}, \\
&y_{43} y_{21} - y_{41} y_{32},\  
y_{42} y_{31} - y_{41} y_{32}.
\}
\end{align*}
Since the monomial order is an elimination order, we conclude that
$I \cap K[y_{ij} \mid i>j]$ has a $\Pi$-Gr\"obner basis given by the
last two binomials. In particular, that ideal is generated, as an
$\Inc(\NN)$-stable ideal, by these binomials. The result
that we have just proved
by computer has first appeared as a theorem in \cite{deLoera95}. This
example gives the ideal of the so-called second hypersimplex, or, with
a slight modification, of the Gaussian one-factor model.  The $k$-factor
model for $k=2$ and higher will be the subject of Chapter
\ref{ch:KFactor}.
\end{ex}

\chapter{Equivariant Noetherianity}

In this chapter we establish a number of constructions of
equivariantly Noetherian rings and spaces. For some of the material
see \cite{Draisma08b}.

Given a ring $R$ (always commutative, with 1) and a monoid $\Pi$ with
a left action on $R$ by means of (always unital) endomorphisms we say
that $R$ is $\Pi$-Noetherian, or equivariantly Noetherian if $\Pi$
is clear from the context, if every chain $I_1 \subseteq I_2 \subseteq
\ldots$ of $\Pi$-stable ideals in $R$ eventually stabilises, that is,
if there exists an $n \in \NN$ for which $I_n=I_{n+1}=\ldots$. This is
equivalent to the condition that any $\Pi$-stable ideal $I$ is generated,
as an ideal, by finitely many $\Pi$-orbits $\Pi f_1,\ldots, \Pi f_s$. We
then say that $f_1,\ldots,f_s$ generate $I$ as a $\Pi$-stable ideal.

We have seen a major example in Chapter~\ref{ch:Groeb}, namely, for any
fixed natural number $k$ and any field $K$ the ring $K[x_{ij} \mid i
\in [k], j \in \NN]$ with its action of $\Inc(\NN)$ on the second index
is $\Inc(\NN)$-Noetherian.

There are several constructions of new equivariantly Noetherian
rings from existing ones. The first and most obvious is that if $R$
is $\Pi$-Noetherian and $I \subseteq R$ is a $\Pi$-stable ideal, then
$R/I$ is $\Pi$-Noetherian: any chain of $\Pi$-stable ideals in $R/I$
lifts to a chain of $\Pi$-stable ideals in $R$ containing $I$, and the
first chain stabilises exactly when the second chain does.

A second construction takes a $\Pi$-Noetherian ring $R$ to the polynomial
ring $R[x]$ in a variable $x$, where $\Pi$ acts only on the coefficients
from $R$. The standard proof of Hilbert's Basis Theorem, say from
\cite{Lang65}, generalises word by word from trivial $\Pi$ to general $\Pi$.

It is not true, in general, that a subring of an equivariantly Noetherian
ring is equivariantly Noetherian. Indeed, this is already not true for
ordinary Noetherianity, where $\Pi$ is the trivial monoid.  However,
the following construction proves that certain well-behaved subrings
of equivariantly Noetherian rings are again equivariantly Noetherian.
Suppose that $S$ is a subring of $R$ with the property that $R$ splits
as a direct sum $S \oplus M$ of $S$-modules. If $J$ is an ideal in $S$
and $I$ is the ideal in $R$ generated by $J$, then we claim that $S
\cap I=J$---indeed, any element $f$ of $S \cap I$ can be written as
$f=\sum_i f_i g_i$ with the $f_i$ elements of $J$ and the $g_i$ elements
of $R$. Applying the $S$-linear projection $\pi: R \to S$ along $M$
to both sides yields $f=\sum_i f_i \pi(g_i) \in J$, as claimed.  If,
moreover, the monoid $\Pi$ acts on $R$ and stabilises $S$, then $I$ is
$\Pi$-stable if $J$ is. We conclude that if $R$ is $\Pi$-Noetherian,
then any chain $I_1 \subseteq I_2 \subseteq \ldots$ of $\Pi$-stable 
ideals in $S$ generates such a chain $J_1 \subseteq J_2
\subseteq \ldots$
in $R$, and $J_n=J_{n+1}=\ldots$ implies that $I_n=J_n \cap S=J_{n+1}
\cap S=I_{n+1}=\ldots$; so $S$ is $\Pi$-Noetherian.

A particularly important example of this situation is the following
proposition, due to Kuttler. Suppose that a group $H$ acts on $R$ by
means of ring automorphisms, and that the action of $H$ commutes with
that of $\Pi$, i.e., for every $\pi \in \Pi$ and $h \in H$ and $f \in R$
we have $\pi h f = h \pi f$. Then the ring $R^H:=\{f \in R
\mid Hf=\{f\}\}$
of $H$-invariant elements of $R$ is stable under the action of $\Pi$.

\begin{prop} \label{prop:Reynolds}
If on the one hand $R$ is $\Pi$-Noetherian and on the other hand $R$
splits as a direct sum of irreducible $\ZZ H$-modules, then $R^H$ is also
$\Pi$-Noetherian.
\end{prop}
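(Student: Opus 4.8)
The plan is to deduce the proposition from the subring construction carried out just above, applied to the subring $S := R^H$ of $R$. For that construction to apply I need three things: that $R^H$ is a subring containing $1$, that $R$ splits as a direct sum $R = R^H \oplus M$ of $R^H$-modules, and that $\Pi$ stabilises $R^H$. Granting these, a chain $I_1 \subseteq I_2 \subseteq \cdots$ of $\Pi$-stable ideals of $R^H$ generates a chain of $\Pi$-stable ideals of $R$ whose intersections with $R^H$ are again the $I_j$ (by the $R^H$-linearity of the projection along $M$), so the chain downstairs stabilises because the one upstairs does; this is exactly $\Pi$-Noetherianity of $R^H$.

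The first point is immediate, since $H$ acts by ring endomorphisms fixing $1$. For the second point I would use the given decomposition $R = \bigoplus_\alpha V_\alpha$ into irreducible $\ZZ H$-modules. For each simple summand $V_\alpha$ the fixed submodule $V_\alpha^H$ is a $\ZZ H$-submodule, hence equals $0$ or all of $V_\alpha$; call $V_\alpha$ \emph{trivial} in the latter case, so that $H$ acts trivially on it. Setting $M$ equal to the sum of the non-trivial $V_\alpha$ and noting that $R^H$ is exactly the sum of the trivial ones (a fixed vector has each of its $V_\alpha$-components fixed, and a non-trivial $V_\alpha$ has no non-zero fixed vector), I obtain an $H$-module decomposition $R = R^H \oplus M$. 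The real content is that $M$ is a module over $R^H$, not merely over $\ZZ H$: for $f \in R^H$ the multiplication map $m \mapsto fm$ on $R$ is $H$-equivariant, because $h(fm) = (hf)(hm) = f(hm)$; composing it with the projection $R \to R^H$ gives an $H$-equivariant map from a non-trivial simple module $V_\alpha$ to a module on which $H$ acts trivially, and such a map must be zero, since a non-trivial simple $\ZZ H$-module has no non-zero trivial quotient. Hence $f M \subseteq M$, and likewise $f R^H \subseteq R^H$ because $R^H$ is a ring, so $M$ is an $R^H$-submodule of $R$ and the projection $R \to R^H$ along $M$ is $R^H$-linear.

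For the third point there is nothing to do: it was already observed that the $\Pi$-action and the $H$-action commute, whence $\pi f \in R^H$ whenever $f \in R^H$. Invoking the subring construction now finishes the proof. I expect the only step requiring genuine care to be the claim that multiplication by an $H$-invariant respects the decomposition $R = R^H \oplus M$ (equivalently, that $M$ is stable under multiplication by $R^H$); everything else is bookkeeping with the earlier construction and with the definition of $R^H$.
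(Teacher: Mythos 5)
Your proof is correct and follows essentially the same route as the paper's: reduce to the earlier subring-with-complement construction, decompose $R$ into isotypic pieces as a sum of trivial and non-trivial irreducible $\ZZ H$-modules, and use the Schur-type observation that an $H$-equivariant map from a non-trivial simple $\ZZ H$-module to the trivial module $R^H$ must vanish to conclude $R^H \cdot M \subseteq M$. (The paper phrases the Schur step via ``kernel is $0$ or everything, and injectivity is impossible'' rather than your ``no non-zero trivial quotient'', but these are the same argument.)
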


\begin{proof}
By the discussion preceding the proposition, we need only prove that $R$
splits as a direct sum $R^H \oplus M$ of $R^H$-modules. For this, split
$R$ as a direct sum $\oplus_i M_i$ of irreducible $\ZZ H$-modules $M_i$. Then
$R^H$ is the direct sum of the $M_i$ with trivial $H$-action, and we set
$M$ equal to the direct sum of the $M_i$ with non-trivial $H$-action.
We want to show that $f M_i \subseteq M$ for every $f \in R^H$ and every
$M_i \subseteq M$. To this end, let
$\rho:R \to R^H$ be the projection along $M$ and consider 
the map $M_i \to R^H$ sending
$m$ to $\rho(f m)$. By invariance of $f$ this map is $H$-equivariant,
and by irreducibility of $M_i$ its kernel is either $\{0\}$ or all of
$M_i$. But in the first case, the non-trivial $H$-module $M_i$ would be
embedded into the trivial $H$-module $R^H$, which is impossible. Hence
that kernel is all of $M_i$, and $fM_i \subseteq M$.
\end{proof}

In our applications, $R$ will typically be an algebra over some field
$K$, and $H$ will act $K$-linearly. Then it suffices that $R$ splits
as a direct sum of irreducible $KH$-modules (as one can infer from the
proposition by replacing $H$ by the group $K^* \times H$).

We give several applications of this proposition. First, we have seen in
Example~\ref{ex:NbyNmatrices} that the ring of polynomials in the entries
$y_{ij}$ of an $\NN \times \NN$-matrix is not $\Inc(\NN)$-Noetherian. The
following corollaries show that interesting quotients of such rings
are $\Inc(\NN)$-Noetherian.

\begin{cor} \label{cor:Segre}
Let $k$ be a natural number. Consider the homomorphism $\psi: K[y_\bm
\mid \bm \in \NN^k] \to K[x_{ij} \mid i \in [k], j \in \NN]$ sending
$y_\bm$ to $\prod_{i=1}^k x_{i,m_i}$. The kernel of $\psi$ is generated
by finitely many $\Inc(\NN)$-orbits of polynomials, and the quotient
$K[(y_\bm)_\bm]/\ker \psi$ is $\Inc(\NN)$-Noetherian.
\end{cor}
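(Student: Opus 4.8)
The plan is to realize $K[(y_\bm)_\bm]/\ker\psi$ as the image of $\psi$, hence as a subring of $R := K[x_{ij} \mid i\in[k], j\in\NN]$, and then apply the invariant-ring machinery of Proposition~\ref{prop:Reynolds}. First I would fix the $\Inc(\NN)$-action on the source ring by $\pi y_\bm = y_{\pi\cdot\bm}$ where $\pi$ acts on $\bm=(m_1,\ldots,m_k)$ componentwise, i.e.\ $\pi\cdot\bm = (\pi(m_1),\ldots,\pi(m_k))$, and check that $\psi$ is $\Inc(\NN)$-equivariant; this is immediate since $\psi(\pi y_\bm) = \prod_i x_{i,\pi(m_i)} = \pi\prod_i x_{i,m_i} = \pi\psi(y_\bm)$. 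Consequently $\ker\psi$ is an $\Inc(\NN)$-stable ideal and $K[(y_\bm)_\bm]/\ker\psi \cong \operatorname{im}\psi =: A$, an $\Inc(\NN)$-stable subring of $R$. So it suffices to show $A$ is $\Inc(\NN)$-Noetherian, and then pull back the generating orbits of $\ker\psi$ from the finitely many orbits guaranteed by $\Inc(\NN)$-Noetherianity (any $\Inc(\NN)$-stable ideal of an $\Inc(\NN)$-Noetherian ring is generated by finitely many orbits, applied here to the ideal $\ker\psi$ of the source ring, which surjects onto the Noetherian ring $A$—so the source ring itself need not be Noetherian, but $\ker\psi$ is still finitely generated up to orbits because... ).

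Let me restart the last step more carefully, since that is the delicate point. The cleanest route is: (i) prove $A$ is $\Inc(\NN)$-Noetherian; (ii) observe that $K[(y_\bm)_\bm]/\ker\psi \cong A$ directly gives the second assertion; (iii) for the first assertion, note that $\ker\psi$ is the preimage under the quotient map of the zero ideal, and use that the lattice of $\Inc(\NN)$-stable ideals of $K[(y_\bm)_\bm]$ containing $\ker\psi$ is isomorphic to that of $A$—but this does \emph{not} bound the number of orbit-generators of $\ker\psi$ itself. So instead I would argue that the subring $A \subseteq R$ is precisely an invariant ring: let $T \cong (K^\times)^k$ act on $R$ by scaling the $i$-th row, $t\cdot x_{ij} = t_i x_{ij}$... no—rather, the image of $\psi$ is cut out by a torus. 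Concretely, introduce the larger torus $H$ acting on $R$ that rescales $x_{ij} \mapsto t_{ij} x_{ij}$ subject to the constraint that $\prod_i x_{i,m_i}$ is invariant; equivalently $H$ is the subtorus of $(K^\times)^{[k]\times\NN}$ on which all the monomials $\prod_i x_{i,m_i}$ have weight zero. Then $R$ is a sum of $H$-weight spaces, hence a direct sum of one-dimensional (irreducible) $KH$-modules; the $H$-invariants $R^H$ are spanned by those monomials lying in the subsemigroup generated by the $\prod_i x_{i,m_i}$, which is exactly $A = \operatorname{im}\psi$. The $H$-action is $K$-linear and commutes with the $\Inc(\NN)$-action (since $\Inc(\NN)$ permutes weight spaces), so Proposition~\ref{prop:Reynolds}, in the field-linear form noted just after its proof, applies: $R$ is $\Inc(\NN)$-Noetherian by Theorem~\ref{thm:kbyNmatrices}, hence $R^H = A$ is $\Inc(\NN)$-Noetherian.

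Finally, for the statement that $\ker\psi$ is generated by finitely many $\Inc(\NN)$-orbits: here I would \emph{not} claim the source ring is Noetherian (it is essentially the $\NN\times\NN$ situation of Example~\ref{ex:NbyNmatrices}), but rather use that $\ker\psi$ is a \emph{toric} (binomial, prime) ideal and that its generation up to symmetry can be deduced from a separate finiteness input—or, more in the spirit of these notes, I would instead state the kernel-generation claim as following from the Equivariant Buchberger framework of Chapter~\ref{ch:Groeb} once one knows the relevant $\Pi$-divisibility order is a well-partial-order on the monomials actually occurring, which for the Segre-type parametrization reduces via Higman's Lemma to Dickson on $\ZZ_{\geq 0}^k$ much as in the proof of Theorem~\ref{thm:kbyNmatrices}.

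The main obstacle is precisely this last point: disentangling the two assertions of the corollary, because $\Inc(\NN)$-Noetherianity of the quotient $K[(y_\bm)_\bm]/\ker\psi$ does \emph{not} by itself imply that $\ker\psi$ is finitely generated up to orbits (the source ring is not equivariantly Noetherian). The honest fix is to prove finite orbit-generation of $\ker\psi$ directly—e.g.\ by exhibiting it as a limit/union of the kernels of the finite-$n$ Segre maps and using a Noetherianity-of-the-inverse-limit or a direct Gröbner argument—and to derive $\Inc(\NN)$-Noetherianity of the quotient \emph{separately} via the $R^H$ identification above. Everything else (equivariance of $\psi$, the weight-space decomposition of $R$, commutation of the torus and $\Inc(\NN)$ actions, invoking Proposition~\ref{prop:Reynolds}) is routine.
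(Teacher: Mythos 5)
Your handling of the second assertion is essentially the paper's ``alternative'' remark: identify the quotient with $S:=\im\psi$ inside $R=K[x_{ij}\mid i\in[k],j\in\NN]$ and exhibit $S$ as a ring of torus invariants, then apply Proposition~\ref{prop:Reynolds}. Two corrections are needed. First, your torus is too large: for a subtorus of $(K^*)^{[k]\times\NN}$ rescaling each variable separately, the action does \emph{not} commute with $\Inc(\NN)$ --- one has $\pi(h\,x_{ij})=t_{ij}x_{i\pi(j)}$ while $h(\pi x_{ij})=t_{i\pi(j)}x_{i\pi(j)}$ --- so the hypothesis of Proposition~\ref{prop:Reynolds} fails as you invoke it; ``permuting weight spaces'' is not the same as commuting actions. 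You must shrink to the row-constant subtorus, i.e.\ the $H\subseteq(K^*)^k$ of tuples with product $1$ acting by $h\,x_{ij}=h_i x_{ij}$, which is exactly the paper's suggestion; moreover the identification $\im\psi=R^H$ needs $K$ infinite, which the corollary does not assume. The paper's actual proof sidesteps invariant theory: $S$ is the span of monomials having equal degree in every row, the span $M$ of all other monomials is an $S$-module, so $R=S\oplus M$ splits as $S$-modules and the discussion preceding Proposition~\ref{prop:Reynolds} gives $\Inc(\NN)$-Noetherianity of $S$ over any field.

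The genuine gap is the first assertion. You rightly note that Noetherianity of the quotient does not yield finite orbit-generation of $\ker\psi$ (the source ring is not $\Inc(\NN)$-Noetherian), but you then only gesture at fixes; in particular the equivariant Gr\"obner route you sketch is unavailable, because $\Inc(\NN)$ acts diagonally on the $k$ entries of the multi-indices, which is precisely the situation of Example~\ref{ex:NbyNmatrices} where $|_\Pi$ fails to be a well-partial-order, so no reduction ``via Higman to Dickson on $\ZZ_{\geq 0}^k$'' as in Theorem~\ref{thm:kbyNmatrices} applies. The missing ingredient is the classical fact that the ideal of rank-one tensors, i.e.\ $\ker\psi$, is generated by the quadratic binomials $y_{m_0m_1}y_{m_0'm_1'}-y_{m_0m_1'}y_{m_0'm_1}$ coming from $2\times 2$-minors of flattenings, where $m_0,m_0'$ have length $\ell\leq k$ and $m_1,m_1'$ length $k-\ell$. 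Each such binomial involves at most $2k$ distinct index values, hence lies in the $\Inc(\NN)$-orbit of one of the finitely many such binomials with all indices in $[2k]$. That single observation proves the first statement; without it (or some substitute) your proposal does not establish the corollary.
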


In more geometric language, that quotient is the coordinate ring of
$k$-dimensional infinite-by-infinite-by-\ldots-by-infinite tensors of
rank one.

\begin{proof}
The first statement follows from the standard fact that the ideal
of the variety of rank-one tensors is generated by the quadrics
$y_{m_0m_1}y_{m_0'm_1'}-y_{m_0m_1'}y_{m_0'm_1}$, where $m_0,m_0'$
are multi-indices of length equal to some $\ell \leq k$ and $m_1,m_1'$
are multi-indices of length $k-\ell$. The entries of the multi-indices
$m_0,m_0',m_1,m_1'$ taken together form a set of cardinality at most
$2k$, and this implies that each quadric of the form above is obtained
by applying some element of $\Inc(\NN)$ to one of the finitely many such
quadrics with all indices in the interval $[2k]$.

The second statement follows from the proposition (or rather the
discussion preceding it): the quotient is isomorphic, as a ring with
$\Inc(\NN)$-action, to the subring $S=\im \psi$ of $K[x_{ij} \mid i \in
[k], j \in \NN]$. The ring $S$ consists of all monomials in the $x_{ij}$
that involve equally many variables, counted with their exponents, from
all of the $k$ rows of the $k \times \NN$-matrix $(x_{ij})_{ij}$. If one
writes $M$ for the vector space complement of $S$ spanned by all other
monomials, then $M$ is an $S$-module, and the fact that $K[(x_{ij})_{ij}]$
is $\Inc(\NN)$-Noetherian implies that $S$ is.
\end{proof}

Alternatively, if $K$ is infinite, then one can characterise $S$ as the
set of $H$-invariants, where $H$ is the subgroup of $(K^*)^k$ consisting
of $k$-tuples with product $1$ and where $h$ acts on $x_{ij}$ by $h
x_{ij}=h_i x_{ij}$. Each monomial outside $S$ spans an irreducible,
non-trivial $H$-module, and the proposition implies that $S$ is
$\Inc(\NN)$-Noetherian.

A substantial generalisation of Corollary~\ref{cor:Segre}, which applies
to a wide class of monomial maps into $K[(x_{ij})_{ij} \mid i \in
[k], j \in \NN]$, is proved in \cite{Draisma13a}. For stabilisation of
appropriate lattice ideals, see \cite{Hillar13}.

The next corollary shows that determinantal quotients of the coordinate
ring of infinite-by-infinite matrices are $\Inc(\NN)$-Noetherian,
provided that the field has characteristic zero.

\begin{cor}
For any natural number $k$ and any field $K$ of characteristic zero,
the quotient of the ring $K[y_{ij} \mid i,j \in \NN]$ by the ideal $I_k$
generated by all $(k+1) \times (k+1)$-minors of the matrix
$(y_{ij})_{ij}$ is $\Inc(\NN)$-Noetherian.
\end{cor}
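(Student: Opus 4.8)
The strategy is to realize the determinantal quotient as a ring of invariants inside an $\Inc(\NN)$-Noetherian polynomial ring, so that Proposition~\ref{prop:Reynolds} applies. The natural candidate comes from the classical ``first fundamental theorem'' of invariant theory: an $(k+1)\times(k+1)$-minor of a matrix vanishes identically exactly when the matrix has rank at most $k$, i.e.\ factors as a product of a $\NN\times[k]$ matrix and a $[k]\times\NN$ matrix. Concretely, introduce two blocks of variables $X=\{x_{i\ell}\mid i\in\NN,\ \ell\in[k]\}$ and $Z=\{z_{\ell j}\mid \ell\in[k],\ j\in\NN\}$ and consider the map $\phi: K[y_{ij}\mid i,j\in\NN]\to K[X\cup Z]$ sending $y_{ij}\mapsto\sum_{\ell=1}^k x_{i\ell}z_{\ell j}$. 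Let $\Inc(\NN)$ act on all three rings through the outer index $\NN$ (trivially on the $[k]$ index); then $\phi$ is $\Inc(\NN)$-equivariant. By the first fundamental theorem in characteristic zero (see e.g.\ the second fundamental theorem as well, or Weyl's book; the point is that $K$ has characteristic zero), the kernel of $\phi$ is exactly the determinantal ideal $I_k$. Hence $K[(y_{ij})]/I_k\cong S:=\im\phi\subseteq K[X\cup Z]$ as rings with $\Inc(\NN)$-action.

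Next I would check that $K[X\cup Z]$ is $\Inc(\NN)$-Noetherian. This follows from Theorem~\ref{thm:kbyNmatrices}: the block $X$ is a $k\times\NN$ matrix of variables (after transposing the roles of the indices, which is harmless) and similarly $Z$; taking $k':=2k$ rows and using the polynomial-ring construction (adjoining the finitely-indexed variables does not matter, or simply regard $X\cup Z$ as a $2k\times\NN$ array of variables), we get that $K[X\cup Z]$ with the $\Inc(\NN)$-action on the $\NN$-index is $\Inc(\NN)$-Noetherian. Finally, to apply the invariant-ring machinery I would identify $S$ as a ring of invariants for a suitable group $H$ commuting with $\Inc(\NN)$: the group $\mathrm{GL}_k(K)$ acts on $X\cup Z$ by $g\cdot x_{i\bullet}=x_{i\bullet}g^{-1}$, $g\cdot z_{\bullet j}=g z_{\bullet j}$, this action commutes with $\Inc(\NN)$, and by the first fundamental theorem $S=K[X\cup Z]^{\mathrm{GL}_k}$. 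Since $K$ has characteristic zero, $\mathrm{GL}_k(K)$ is linearly reductive, so $K[X\cup Z]$ decomposes as a direct sum of irreducible rational $\mathrm{GL}_k(K)$-modules; Proposition~\ref{prop:Reynolds} (in its $K$-linear form, as noted in the remark following it) then yields that $S\cong K[(y_{ij})]/I_k$ is $\Inc(\NN)$-Noetherian.

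The main obstacle is making the invariant-theoretic input precise and citing it correctly: one needs that $\ker\phi=I_k$ (first fundamental theorem for $\mathrm{GL}_k$, equivalently that the determinantal variety is exactly the rank-$\le k$ locus and that the determinantal ideal is radical and prime in characteristic zero) and that $\im\phi$ equals the full invariant ring $K[X\cup Z]^{\mathrm{GL}_k}$. Both are standard but the characteristic-zero hypothesis is essential here, both for the reductivity needed in Proposition~\ref{prop:Reynolds} and for the identification of the determinantal ideal (in positive characteristic the determinantal ideal can still be prime, but linear reductivity of $\mathrm{GL}_k$ fails, so this particular argument breaks down). A secondary, purely bookkeeping point is to confirm that the $\Inc(\NN)$-action really commutes with $\mathrm{GL}_k(K)$ and stabilizes $S$: this is immediate because $\Inc(\NN)$ only permutes/duplicates the outer $\NN$-indices while $\mathrm{GL}_k(K)$ only mixes the inner $[k]$-indices, so the two actions are on ``different coordinates'' and commute on the nose.
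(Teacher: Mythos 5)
Your proposal is correct and follows essentially the same route as the paper: realize $K[(y_{ij})]/I_k$ as the $\mathrm{GL}_k$-invariant ring of $K[x_{i\ell},z_{\ell j}]$ via the pairings $\sum_\ell x_{i\ell}z_{\ell j}$, invoke the First and Second Fundamental Theorems together with linear reductivity in characteristic zero, and apply Proposition~\ref{prop:Reynolds} to the $\Inc(\NN)$-Noetherian ambient ring (a $2k\times\NN$ array of variables covered by Theorem~\ref{thm:kbyNmatrices}). The only differences are cosmetic conventions for the $\mathrm{GL}_k$-action.
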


Note that the set of these determinants is the union of finitely many
$\Inc(\NN)$-orbits of equations, so that the corollary implies that any
$\Inc(\NN)$-stable ideal containing $I_k$ is generated by finitely many
$\Inc(\NN)$-orbits.

\begin{proof}
Let the group $H=\lieg{GL}_k$ act on the ring $K[x_{il} \mid i \in \NN, l
\in [k]]$ by $h x_{il}:=(xh)_{il}$, where $xh$ is the product of the $\NN
\times k$-matrix $x$ with variable entries and the $k \times k$-matrix
$h$. Similarly, let $H$ act on the ring $K[z_{lj} \mid l \in [k], j
\in \NN]$ by $h z_{lj}:=(h^{-1} z)_{lj}$. Note that both actions commute
with the action of $\Inc(\NN)$ on the indices $i,j$, respectively. Let $R$
be the polynomial ring $K[x_{il},z_{lj} \mid i,j \in \NN, l \in [k]]$,
equipped with the natural $\Inc(\NN)$-action and $H$-action. Classical
invariant theory tells us that rings, like $R$, on which $H$ acts as an
algebraic group split into a direct sum of irreducible $KH$-modules; here
we use that $\cha K=0$. So we may apply Proposition~\ref{prop:Reynolds}.

The First Fundamental Theorem \cite[Theorem 5.2.1]{Goodman09} for $H$
states that the algebra $R^H$ of $H$-invariant elements of the ring $R$ is
generated by all pairings $p_{ij}:=\sum_l x_{il} z_{lj}=(xz)_{ij}$. The
Second Fundamental Theorem \cite[Theorem 12.2.12]{Goodman09} states that
the kernel of the homomorphism $K[(y_{ij})_{ij}] \mapsto R$ determined by
$y_{ij} \mapsto p_{ij}$ is precisely $I_k$.  Thus the quotient by $I_k$
is isomorphic, as a $K$-algebra with $\Inc(\NN)$-action, to $R^H$. This
proves the corollary.

\end{proof}

Similar results are obtained by taking other rings with group actions
where the invariants and the polynomial relations among them are known.
Here is an example, which first appeared in \cite{Draisma08b}.

\begin{cor}
For any natural number $k$ and any field $K$ of characteristic zero,
the kernel of the homomorphism $\psi:K[y_\bm \mid \bm \in \NN^k,
m_1<\ldots<m_k] \to K[x_{ij} \mid i \in [k], j \in \NN]$ sending $y_\bm$
to the determinant of $x[\bm]$, the $k \times k$-submatrix of $x$ obtained
by taking the columns indexed by $\bm$, is generated by finitely many
$\Inc(\NN)$-orbits; and the quotient of $K[(y_{\bm})_{\bm}]$ by $\ker
\psi$ is $\Inc(\NN)$-Noetherian.
\end{cor}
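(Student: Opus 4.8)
The plan is to mimic exactly the strategy used in the two preceding corollaries: realize the quotient $K[(y_\bm)_\bm]/\ker\psi$ as a ring of invariants inside an $\Inc(\NN)$-Noetherian polynomial ring, with the group action commuting with the $\Inc(\NN)$-action, and then invoke Proposition~\ref{prop:Reynolds}. Here the relevant group is $H=\lieg{SL}_k$ acting on $R:=K[x_{ij}\mid i\in[k],j\in\NN]$ in the natural way, $h\cdot x_{ij}=(h^{-1}x)_{ij}$ (viewing $x$ as a $k\times\NN$ matrix), so that the $k\times k$ minors $\det x[\bm]$ are exactly the bracket invariants $[\bm]$ of classical invariant theory. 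This action commutes with $\Inc(\NN)$, which only permutes the column index $j$, and since $\cha K=0$ the ring $R$, being a polynomial representation of the reductive group $\lieg{SL}_k$, splits as a direct sum of irreducible $KH$-modules. Thus the hypotheses of Proposition~\ref{prop:Reynolds} are met once we identify $R^H$ with the image of $\psi$ and compute $\ker\psi$.

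The identification of $R^{\lieg{SL}_k}$ is the First Fundamental Theorem for $\lieg{SL}_k$: the invariant ring is generated by the brackets $[\bm]=\det x[\bm]$, $\bm\in\NN^k$ with $m_1<\dots<m_k$ (see \cite[Ch.~12 or Ch.~9]{Goodman09} for the $\lieg{SL}_k$ FFT). Hence $\im\psi=R^H$, and since the $\Inc(\NN)$-action on $R$ restricts to the given $\Inc(\NN)$-action on the $y_\bm$ under $\psi$, the isomorphism $K[(y_\bm)_\bm]/\ker\psi\cong R^H$ is an isomorphism of $K$-algebras with $\Inc(\NN)$-action. Proposition~\ref{prop:Reynolds} (in its $K$-linear form, replacing $H$ by $K^*\times H$ as remarked after the proposition) then yields that $R^H$, and therefore the quotient, is $\Inc(\NN)$-Noetherian. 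For the first assertion—that $\ker\psi$ is generated by finitely many $\Inc(\NN)$-orbits—one invokes the Second Fundamental Theorem for $\lieg{SL}_k$: the relations among the brackets are generated by the Grassmann--Pl\"ucker (syzygy) relations, each of which involves at most $2k$ column indices. An element of $\Inc(\NN)$ carrying the interval $[2k]$ onto the set of indices actually appearing in a given Pl\"ucker relation sends one of the finitely many Pl\"ucker relations supported on $[2k]$ to that relation; hence the full set of Pl\"ucker relations is the union of finitely many $\Inc(\NN)$-orbits, and since $\Inc(\NN)$ is only a monoid we must also check that these finitely many orbits already generate $\ker\psi$ as an $\Inc(\NN)$-stable ideal—but that is immediate once the $\Inc(\NN)$-translates span all Pl\"ucker relations and those generate $\ker\psi$ as an ordinary ideal. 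Finally, combining the two statements (the kernel is a finite union of orbits, and the quotient is $\Inc(\NN)$-Noetherian) gives, as in the remark after the determinantal corollary, that any $\Inc(\NN)$-stable ideal containing $\ker\psi$ is generated by finitely many $\Inc(\NN)$-orbits.

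The main obstacle is locating the correct classical-invariant-theory inputs in the form needed: the $\lieg{SL}_k$ First Fundamental Theorem identifying $R^{\lieg{SL}_k}$ with the bracket algebra and the $\lieg{SL}_k$ Second Fundamental Theorem identifying its relation ideal with the Pl\"ucker ideal (\cite{Goodman09} states these, though the numbering differs from the $\lieg{GL}_k$ case used in the previous corollary). One must also be slightly careful that $\lieg{SL}_k$, rather than $\lieg{GL}_k$, is the right group: under $\lieg{GL}_k$ the brackets are only relative (semi-)invariants transforming by a power of the determinant character, so one genuinely needs $\lieg{SL}_k$ (or, equivalently, one works $\lieg{GL}_k$-equivariantly and passes to the appropriate isotypic component); this does not affect complete reducibility in characteristic zero. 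Everything else—commutativity of the two actions, the bound $2k$ on the support of a Pl\"ucker relation, and the reduction of "finite union of orbits plus Noetherian quotient" to "any stable ideal containing $\ker\psi$ is finitely orbit-generated"—is routine and parallel to the arguments already given for the Segre and determinantal corollaries.
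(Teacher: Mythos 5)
Your proof is correct and follows essentially the same route as the paper's own: take $H=\lieg{SL}_k$ acting on $K[(x_{ij})_{ij}]$, identify $\im\psi$ with $R^H$ via the First Fundamental Theorem, use the Second Fundamental Theorem (Pl\"ucker relations, supported on at most $2k$ indices) to obtain the finite orbit generation of $\ker\psi$, and apply Proposition~\ref{prop:Reynolds}. The extra remarks you add—why $\lieg{SL}_k$ rather than $\lieg{GL}_k$ is needed, and the explicit $2k$ bound on the support—are correct elaborations of the paper's terse ``proceed as in the previous case.''
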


\begin{proof}
Let $H=\lieg{SL}_k$, the group of $k \times k$-matrices of determinant
$1$, act on $K[(x_{ij})_{ij}]$ by $h x_{ij}=(h^{-1} x)_{ij}$. The First
Fundamental Theorem for $\lieg{SL}_n$ says that the $k\times k$-minors of $x$
generate the invariant ring of $H$, and the Second Fundamental Theorem
says that the Pl\"ucker relations among those determinants, which can
be covered by finitely many $\Inc(\NN)$-orbits, generate the ideal of
all relations. Now proceed as in the previous case.
\end{proof}

We remark that Alexei Krasilnikov showed that the Noetherianity of this
corollary does not hold when $\cha K=2$ and $k=2$. However,
a weaker form of Noetherianity, which we introduce now, does hold. 

Let $X$ be a topological space equipped with a right action of a
monoid $\Pi$ by means of continuous maps $X \to X$. Then we call $X$
equivariantly Noetherian, or $\Pi$-Noetherian, if every chain $X=X_0
\supseteq X_1 \supseteq X_2 \supseteq \ldots$ of closed, $\Pi$-stable
subsets stabilises.  If $R$ is a $K$-algebra with a left action of $\Pi$
by means of $K$-algebra endomorphisms, then for any $K$-algebra $A$ the
set $X:=R(A):=\Hom_K(R,A)$ of $A$-valued points of $R$ is a topological
space with respect to the Zariski topology in which closed sets are
defined by the vanishing of elements of $R$. Moreover, the monoid
$\Pi$ acts from the right on $R(A)$ by $(p \pi)(r)=p(\pi r)$. If $R$
is $\Pi$-Noetherian, then $R(A)$ is $\Pi$-Noetherian in the topological
sense. Conversely, if $R(A)$ is $\Pi$-Noetherian in the topological
sense for every $K$-algebra $A$, then $R$ is $\Pi$-Noetherian---indeed,
just take $A$ equal to $R$, so that the map that takes closed sets to
vanishing ideals is a bijection. However, topological Noetherianity of,
say, $R(K)$ does not necessarily imply Noetherianity of $R$. An example
of this phenomenon is given by Krasilnikov's example: the ring $K[\det
x[i,j] \mid i,j \in \NN, i<j]$, where $x$ is a $[2] \times \NN$-matrix
of variables, is not $\Inc(\NN)$-Noetherian if $\cha K=2$, but its set of
$K$-valued points is---indeed, this set of points is the image of $K^{[2]
\times \NN}$ under the $\Inc(\NN)$-equivariant map sending a matrix to
the vector of its $2 \times 2$-determinants. Since $K^{[2] \times \NN}$
is $\Inc(\NN)$-Noetherian, so is its image.

More generally, $\Pi$-equivariant images of $\Pi$-Noetherian topological
spaces are $\Pi$-Noetherian, and so are $\Pi$-stable subsets with the
induced topology. Another construction that we shall make much use of is
the following. 

\begin{prop} \label{prop:ZG}
Let $G$ be a group with a right action by homeomorphisms on a topological
space $X$. Let $\Pi$ be a submonoid of $G$ and let $Z$ be a $\Pi$-stable
subset of $X$. Assume that $Z$ is $\Pi$-Noetherian with the induced
topology. Then $Y:=ZG=\bigcup_{g \in G} Zg \subseteq X$ is $G$-Noetherian
with the induced topology.
\end{prop}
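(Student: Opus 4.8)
The plan is to reduce $G$-Noetherianity of $Y = ZG$ to $\Pi$-Noetherianity of $Z$ by analysing how a chain of closed $G$-stable subsets of $Y$ intersects $Z$. The key observation is that for a $G$-stable subset $W \subseteq Y$, one has $W = (W \cap Z) G$: the inclusion $\supseteq$ is clear since $W$ is $G$-stable, and conversely any point of $W$ lies in $Zg$ for some $g \in G$, hence $wg^{-1} \in Z \cap W$ (using that $W$ is $G$-stable), so $w \in (W \cap Z)g \subseteq (W \cap Z)G$. Thus $W$ is completely determined by $W \cap Z$, and moreover $W \cap Z$ is a closed subset of $Z$ (in the induced topology) that is $\Pi$-stable, because $\Pi \subseteq G$ stabilises both $W$ (it stabilises $Y$ and $W$ is $G$-stable) and $Z$ (by hypothesis).

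First I would take an arbitrary descending chain $Y = W_0 \supseteq W_1 \supseteq W_2 \supseteq \cdots$ of closed $G$-stable subsets of $Y$. Intersecting with $Z$ gives a descending chain $Z = W_0 \cap Z \supseteq W_1 \cap Z \supseteq \cdots$ of closed $\Pi$-stable subsets of $Z$. Since $Z$ is $\Pi$-Noetherian, this chain stabilises: there is an $n$ with $W_n \cap Z = W_{n+1} \cap Z = \cdots$. Then by the identity above, $W_m = (W_m \cap Z)G = (W_n \cap Z)G = W_n$ for all $m \geq n$, so the original chain stabilises as well. This is the whole argument.

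The one point requiring a little care — and the place where I expect the only real subtlety to lie — is verifying that the operations behave correctly with respect to the induced topologies and the distinction between $X$ and $Y$. Specifically: $Z$ carries the topology induced from $X$, $Y$ carries the topology induced from $X$, and a closed subset of $Y$ need not be closed in $X$; but $W \cap Z$ is closed in $Z$ because it equals (closed in $Y$) $\cap \, Z$, and closed-in-$Y$ sets are of the form (closed-in-$X$) $\cap \, Y$, so $W \cap Z$ = (closed-in-$X$) $\cap \, Z$ as required. One should also note that $Z \subseteq Y$, so $W \cap Z$ makes sense inside $Y$ and the identity $W = (W \cap Z)G$ is an identity of subsets of $Y$ (equivalently of $X$). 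No genuinely hard step arises; the proposition is essentially the observation that a $G$-space built from a $\Pi$-Noetherian ``fundamental domain'' $Z$ inherits Noetherianity, and the proof is a direct translation of that idea.
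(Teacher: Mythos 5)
Your proof is correct and follows essentially the same argument as the paper: intersect the chain with $Z$, invoke $\Pi$-Noetherianity of $Z$, and recover each $G$-stable $W$ from $W\cap Z$ via $W=(W\cap Z)G$. The care you take with the induced topologies is sound, and no gap remains.
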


\begin{proof}
Let $Y=Y_1 \supseteq Y_2 \supseteq Y_3 \supseteq \ldots$ be a chain
of $G$-stable closed subsets of $Y$. Then each $Z_i:=Y_i \cap Z$ is
$\Pi$-stable and closed, hence by $\Pi$-Noetherianity of $Z$ there
exists an $n$ with $Z_n = Z_{n+1} = \ldots$. By definition of $Y$, for
each $y \in Y_i$ there exist a $g \in G$ and a $z \in Z$ with $y=zg$,
and by $G$-stability of $Y_i$ we have $z=yg^{-1} \in Z_i$.
This means that $Y_i$ can be recovered from $Z_i$ as
$Y_i=Z_iG $, and hence the chain $Y_1 \supseteq Y_2 \supseteq
Y_3 \supseteq \ldots$ stabilises at $Y_n$, as well.
\end{proof}

\chapter{Chains of varieties} \label{ch:Chains}

In the remainder of these notes we study various chains of interesting
embedded finite-dimensional varieties, for which we want to prove that
from some member of the chain on, all equations for later members
come from those of earlier members by applying symmetry. To use the
infinite-dimensional techniques from the previous chapters, we first pass
to a projective limit, prove that the limit is defined by finitely many
orbits of equations, and from this fact we derive the desired result
concerning the finite-dimensional varieties. In this short chapter we
set up the required framework for this, again without trying to be as
general as possible. Most of this material is from \cite{Draisma08b}.

Thus let $K$ be a field and let $R_1,R_2,\ldots$ be commutative
$K$-algebras with $1$. The algebra $R_i$ plays the role of coordinate ring
of the ambient space of the $i$-th variety in our chain. Assume that the
$R_i$ are linked by (unital) ring homomorphisms $\iota_i:R_i \to R_{i+1}$
and $\pi_i:R_{i+1} \to R_i$ satisfying $\pi_i \circ \iota_i=1_{R_i}$.
Then we can form the $K$-algebra $R_\infty:=\bigcup_{i \in \NN} R_i$
with respect to the inclusions $\iota_i$; the use of the $\pi_i$ will
become clear later.

Suppose, next, that are given ideals $I_i \subseteq R_i$ such that
$\pi_i$ maps $I_{i+1}$ into $I_i$ and $\iota_i$ maps $I_i$ into
$I_{i+1}$. The ideal $I_i$ plays the role of defining ideal of the
$i$-th variety in our chain. Writing $S_i:=R_i/I_i$ we find that the
$\iota_i,\pi_i$ induce inclusions $S_i \to S_{i+1}$ and surjections
$S_{i+1} \to S_i$, respectively, and we set $I_\infty:=\bigcup_i I_i$
and $S_\infty:=\bigcup_i S_i$, which also equals $R_\infty / I_\infty$.

Assume, next, that a group $G_i$ acts on $R_i$ from the left by means
of $K$-algebra automorphisms, and that we are given embeddings $G_i \to G_{i+1}$
that render both $\iota_i$ and $\pi_i$ equivariant with respect to
$G_i$. Suppose furthermore that each $I_i$ is $G_i$-stable, which
expresses that the $i$-th variety has the same symmetries as imposed
on the ambient space. We form the group $G_\infty:=\bigcup_i G_i$, which
acts on $R_\infty,I_\infty,S_\infty$ by means of automorphisms.

For any $K$-algebra $A$, we write $R_i(A), S_i(A), R_\infty(A),
S_\infty(A)$ for the sets of $A$-valued points of these algebras,
i.e., for the set of homomorphisms $R_i \to A$, etc. As customary in
algebraic geometry, for a $p$ in these point sets, we write $f(p)$
rather than $p(f)$ for the evaluation of $p$ on an element $f$ in the
corresponding algebra. These sets are topological spaces with respect
to the Zariski topology, in which closed sets are of the form $\{p \in
R_i(A) \mid J(p)=\{0\}\}$ for some ideal $J$ in $R_i$, and similarly
for the other algebras. On these topological spaces $G_i$ or $G_\infty$
acts by means of homeomorphisms. Our set-up so far is summarised in the
diagram of Figure~\ref{fig:Chains}, where $\iota^*,\pi^*$ are the
pull-backs of $\iota$ and $\pi$, respectively.

\begin{figure}
\includegraphics{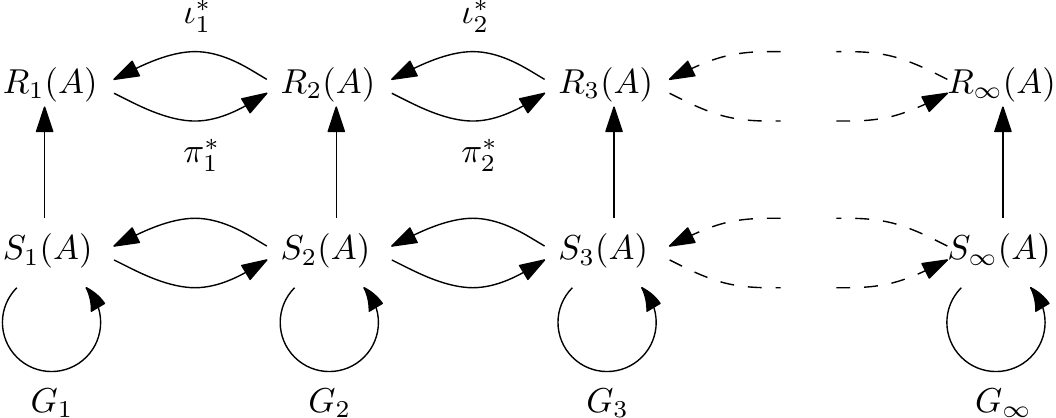}
\caption{Chains of varieties}
\label{fig:Chains}
\end{figure}

The relation $\iota^* \circ \pi^*=1$ implies that $\iota^*$ is surjective
(and not just dominant) and that $\pi^*$ is injective; indeed, the latter
is a closed embedding.  Still, $\pi^*$ is needed only a bit later.

The topological space $R_\infty(A)$ is canonically the same
as the projective limit, in the category of topological spaces, of the
spaces $R_i(A)$ with their Zariski topologies: First, at the level of
sets, an $A$-valued point $p$ of $R_\infty$ gives rise, by composition
with the embeddings $R_i \to R_\infty$, to homomorphisms $p_i:=R_i
\to A$ for all $i \in \NN$. The resulting sequence $(p_1,p_2,\ldots)$
has the property that the pull-back of $\iota_i^*$ maps $p_{i+1}$ to
$p_i$, i.e., it is a point of the inverse limit $\lim_{\ot i} R_i(A)$
of sets. Conversely, a point of this inverse limit gives homomorphisms
$p_i:R_i \to A$ such that $p_{i+1} \circ \iota_i=p_i$, and together these
define a homomorphism $R_\infty \to A$.  Second, the projective limit
topology on $R_\infty(A)$ is the weakest topology that renders all maps
$R_\infty(A) \to R_i(A)$ continuous. This means, in particular, that sets
given by the vanishing of a single element of $R_i \subseteq R_\infty$
must be closed, and so must intersections of these, which are sets
given by the vanishing of an ideal in $R_\infty$.  This shows that the
projective limit topology on $R_\infty(A)$ has at least as many closed
sets as the Zariski topology, and the converse is also clear since the
maps $R_\infty(A) \to R_i(A)$ are continuous in the Zariski topology.

The basic result that we shall use throughout the rest of the notes is
the following, where the use of the $\pi_i$ becomes apparent.

\begin{prop} \label{prop:Chains}
Let $i_0 \in \NN$ and assume that the set $S_\infty(A)$
is characterised inside $R_\infty(A)$ by the vanishing of all $g f \in
R_\infty$ with $g \in G_\infty$ and $f \in I_{i_0}$. Then for $i \geq i_0$
the set $S_i(A)$ is characterised by the vanishing of all functions of
the form $\pi_i \cdots \pi_{l-1} g f$ with $l \geq i$, $g \in G_l$,
and $f \in I_{i_0}$.
\end{prop}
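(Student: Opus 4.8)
The plan is to prove both inclusions. One direction is easy: if $p\in S_i(A)$, then $p$ lifts through the tower to a point $\tilde p\in S_\infty(A)$ (using the surjectivity of $\iota_l^*$, which we noted follows from $\iota^*\circ\pi^*=1$, and compatibility along the chain), so every function of the form $\pi_i\cdots\pi_{l-1}gf$ with $l\ge i$, $g\in G_l$, $f\in I_{i_0}$ vanishes at $\tilde p$; and since the functoriality diagram of Figure~\ref{fig:Chains} commutes, evaluating such a function at $p$ is the same as evaluating $gf$ at the appropriate component $\tilde p_l$, which vanishes by the hypothesis on $S_\infty(A)$. Hence $p$ lies in the zero set described in the statement. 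Actually the cleanest way is: since $gf\in I_l$ for $l\ge i_0$ (because $I_{i_0}$ is pushed forward into all later $I_l$ and each $I_l$ is $G_l$-stable), the element $\pi_i\cdots\pi_{l-1}gf$ lies in $I_i$, so it vanishes on all of $S_i(A)$ automatically; this direction does not even need the hypothesis.

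For the converse, suppose $p\in R_i(A)$ vanishes on all functions $\pi_i\cdots\pi_{l-1}gf$ with $l\ge i$, $g\in G_l$, $f\in I_{i_0}$; I must show $p\in S_i(A)$, i.e.\ $p$ kills $I_i$. The idea is to build an $A$-valued point $\tilde p$ of $R_\infty$ whose $i$-th component is $p$ and which lies in $S_\infty(A)$; then the hypothesis on $S_\infty(A)$ forces $\tilde p$ to kill $I_\infty\supseteq I_i$, and in particular its $i$-th component $p$ kills $I_i$. To construct $\tilde p$, use the splittings: define the $l$-th component of $\tilde p$ for $l\le i$ by $\tilde p_l:=p\circ\iota_l\circ\cdots\circ\iota_{i-1}$ (pull $p$ down), and for $l\ge i$ by $\tilde p_l:=p\circ\pi_i\circ\cdots\circ\pi_{l-1}$ (push $p$ up using the $\pi$'s). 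One checks these are compatible: for $l\ge i$ we have $\tilde p_l\circ\iota_{l-1}=p\circ\pi_i\cdots\pi_{l-1}\iota_{l-1}=p\circ\pi_i\cdots\pi_{l-2}=\tilde p_{l-1}$ using $\pi_{l-1}\iota_{l-1}=1$; the case $l\le i$ is immediate; and the junction at $l=i$ matches by construction. So $(\tilde p_l)_l$ is a genuine point of $\lim_{\ot l}R_l(A)=R_\infty(A)$, with $\tilde p_i=p$.

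It remains to verify $\tilde p\in S_\infty(A)$, i.e.\ that $\tilde p$ kills every $gf$ with $g\in G_\infty$, $f\in I_{i_0}$. Given such $g,f$, pick $l\ge i$ large enough that $g\in G_l$; then $gf\in I_l$ and $\tilde p(gf)=\tilde p_l(gf)=p(\pi_i\cdots\pi_{l-1}gf)=0$ by the assumption on $p$. (If one worries about $g\in G_{i_0}$ with $i_0<i$, just enlarge $l$; the group embeddings $G_l\hookrightarrow G_{l+1}$ make $g$ available at every large enough level.) By the hypothesis, $\tilde p$ therefore lies in $S_\infty(A)$, so it kills $I_\infty$, so $p=\tilde p_i$ kills $I_i$, i.e.\ $p\in S_i(A)$. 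I expect the only real subtlety — the ``main obstacle'' — to be bookkeeping: making sure the two-sided formula for $\tilde p_l$ (pull down with $\iota$'s below level $i$, push up with $\pi$'s above it) is genuinely compatible along the whole chain and that $g$ can always be realized at a sufficiently high level; but this is exactly what the relation $\pi_l\circ\iota_l=1$ and the directed union structure $G_\infty=\bigcup_l G_l$ are there to guarantee.
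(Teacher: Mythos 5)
Your proof is correct and follows essentially the same route as the paper's: the easy direction via $\pi_i\cdots\pi_{l-1}gf\in I_i$, and the converse by extending a zero $p\in R_i(A)$ of the given functions to a point of $R_\infty(A)$ by pushing up with the $\pi_j$'s and pulling down with the $\iota_j$'s, then evaluating $gf$ at a high enough level $l\geq i$ with $g\in G_l$ to reduce to the hypothesis on $p$. The only cosmetic remark is that your first sketch of the easy direction (lifting $p$ to $S_\infty(A)$) is unnecessary detour, as you yourself note — the cleaner argument you give afterwards is exactly what the paper uses.
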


\begin{proof}
That these functions vanish on $S_i(A)$ follows from the inclusion
$I_{i_0} \subseteq I_l$, the fact that $G_l$ stabilises $I_l$, and
the fact that $\pi_j$ maps $I_{j+1}$ into $I_j$. Conversely, suppose
that $p_i \in R_i(A)$ is a zero of all functions in the proposition,
and let $p=(p_1,p_2,\ldots)$ be the point of $R_\infty(A)$ obtained
by setting $p_{j+1}:=\pi_j^* p_j$ for $j \geq i$ and $p_j:=\iota_j^*
p_{j+1}$ for $j<i$. Then $p$ is a point in $S_\infty(A)$ by the assumed
characterisation of the latter set: any $g \in G_\infty$ lies in $G_l$
for some $l$ which we may take larger than $i$, and for $f \in I_{i_0}$
we have
\[ (g f)(p)=(g f)(p_l)=(g f)(\pi_{l-1}^* \cdots \pi_i^* p_i)
=(\pi_{l-1} \cdots \pi_i g f)(p_i)=0,\] 
as desired. In particular, this means that $p_i$ lies in $S_i(A)$. 
\end{proof}

It would be more elegant to characterise $S_i(A)$ for $i \geq i_0$ as the
common vanishing set of $G_i I_{i_0}$, i.e., of all functions of the form
$g f$ with $f \in I_{i_0}$ and $g \in G_i$. For this we introduce an
additional condition. For $l \geq i$ write $\iota_{il}:R_i \to R_l$
for the composition $\iota_{l-1} \cdots \iota_i$ and $\pi_{li}:R_l \to
R_i$ for the composition $\pi_i \cdots \pi_{l-1}$. The condition that
we want is:
\begin{quotation} 
For all indices $l,i_0,i_1$ with $l \geq i_0,i_1$ and for all $g \in G_l$
there exist an index $j \leq i_0,i_1$ and group elements $g_0 \in G_{i_0}, g_1 \in G_{i_1}$ such that 
\begin{equation} \label{eq:Giotapi}
\tag{*}
\pi_{li_1} \ g \ \iota_{i_0 l} = g_1 \  \iota_{ji_1} \  \pi_{i_0 j} \ g_0
\end{equation}
holds as an equality of homomorphisms $R_{i_0} \to R_{i_1}$.
\end{quotation}
This guarantees that the functions $\pi_{li} g f=\pi_{li} g \iota_{i_0 l}
f$ from the proposition can be written as $g_1 \iota_{ji} \pi_{i_0 j} g_0
f$ for some $j \leq i,i_0$ and $g_0 \in G_{i_0}$ and $g_1 \in G_i$. Since
$I_{i_0}$ is $G_{i_0}$-stable and $\pi_{i_0 j}$ maps $I_{i_0}$ into $I_j
\subseteq I_{i_0}$ the latter expression is an element of $G_i I_{i_0}$.

The discussion so far concerned an arbitrary, fixed $K$-algebra
$A$. In several applications we shall just take $A$ equal to $K$,
and the conclusion is that the point sets $S_i(A)$ for $i \geq i_0$ are
defined set-theoretically by equations coming from $I_{i_0}$ using
symmetry. However, if one assumes that $G_\infty I_{i_0}$ generates
$I_\infty$, then the assumption in the proposition holds for all
$K$-algebras $A$, hence so does the conclusion. From this one can conclude
that for $i \geq i_0$ the functions featuring in the proposition generate
the ideal $I_i$. Under the additional assumption \eqref{eq:Giotapi}
one finds that $G_i I_{i_0}$ generates the ideal $I_i$. We conclude this
chapter with a well-known example which paves the way for the treatment
of the $k$-factor model in the next chapter.

\begin{ex} \label{ex:Rankk}
Fix a natural number $k$. For $n \in \NN$ let $R_n$ be the polynomial
ring over $K$ in the $\binom{n+1}{2}$ variables $y_{ij}=y_{ji}$ with $i,j
\leq n$. Let $\iota_n$ be the natural inclusion $R_n \to R_{n+1}$ and
let $\pi_n$ be the projection $R_{n+1} \to R_n$ mapping all variables
to zero that have one or both indices equal to $n+1$. Then we have
$\pi_n \iota_n=1$ as required. Let $I_n \subseteq R_n$ be the ideal of
polynomials vanishing on all symmetric $n \times n$-matrices over $K$
of rank at most $k$. Then $\iota_n$ maps $I_n$ into $I_{n+1}$ since the
upper-left $n \times n$-block of an $(n+1)\times(n+1)$-matrix of rank
at most $k$ has itself rank at most $k$, and $\pi_n$ maps $I_{n+1}$
into $I_n$ since appending a zero last row and column to any matrix
yields a matrix of the same rank. Let $G_n:=\Sym(n)$ act on $R_n$ by $g
y_{ij}=y_{g(i),g(j)}$, and embed $G_n$ into $G_{n+1}$ as the stabiliser of
$n+1$. Then $G_n$ stabilises $I_n$ and the maps $\pi_n$ and $\iota_n$ are
$G_n$-equivariant. So we are in the situation discussed in this chapter.

Even the additional assumption \eqref{eq:Giotapi} holds. Indeed, consider
the effect of appending $l-i_1$ zero rows and columns to a symmetric
$i_1 \times i_1$-matrix, then simultaneously permuting rows and columns,
and finally forgetting the last $l-i_0$ rows and columns of which,
say, $m$ come from the zero rows and columns introduced in the first
step. Set $j:=i_1-(l-i_0-m)$, which is the number of rows and columns
of the original matrix surviving this operation. Then the same effect
is obtained by first permuting (with $g_1$) rows and columns such that
the $i_1-j$ rows and columns to be forgotten are in the last $i_1-j$
positions, then forgetting those rows and columns, then appending
$l-i_1-m=i_0-j$ zero rows and columns, and finally suitably permuting
rows and columns with a $g_0 \in \Sym(i_0)$.

It is known, of course, that if $K$ is infinite, then $I_n$ is generated
by all $(k+1) \times (k+1)$-minors of the matrix $(y_{ij})_{ij}$. This
implies that $I_\infty$ is generated by $G_\infty I_{2k+2}$; here 
$2k+2$ is the smallest size where representatives of all 
$\Sym(\NN)$-orbits of minors of an infinite symmetric matrix can be seen. 
But conversely, if through some other method (computational
or otherwise) one can prove that $I_\infty$ is indeed generated by
$G_\infty I_{2k+2}$, then by the discussion above this implies that $I_n$
is generated by $G_n I_{2k+2}$ for all $n \geq 2k+2$. Using
the equivariant Gr\"obner basis techniques from Chapter~\ref{ch:Groeb}
one can prove such a statement automatically for small $k$, say $k=1$
and $k=2$, much like we have done in Example~\ref{ex:OneFactor}.
\end{ex}

\chapter{The independent set theorem}

To appreciate the results of this chapter---though not to understand
the proofs---one needs some familiarity with Markov bases and their
relation to toric ideals. We formulate and prove the independent set
theorem from \cite{Hillar09}, first conjectured in \cite{Hosten07},
directly at the level of ideals.

Fix a natural number $m$, let $\Gamma$ be a subset of
$2^{[m]}$ (thought of as a hypergraph with vertex set $[m]$;
see Figure~\ref{fig:Gamma} for an example), and fix an 
infinite field $K$; what follows will, in fact, not depend on $K$.
To any $m$-tuple $r=(r_1,\ldots,r_m) \in \NN^{[m]}$ of natural numbers we
associate the polynomial ring 
\[ R_r:=K[y_{i_1,\ldots,i_{m}} \mid (i_1,\ldots,i_m) \in
[r_1] \times \cdots \times [r_t]] \]
in $\prod_{t \in [m]} r_t$-many variables, and the polynomial ring 
\[ Q_r:=K[x_{F,(i_t)_{t \in F}} \mid F \in 
\Gamma \text{ and } \forall t \in F: i_t \in [r_t]]. \]
Furthermore, we define 
$I_r \subseteq R_r$ as the kernel of the homomorphism $R_r \to Q_r$
mapping $y_{(i_t)_{t \in [m]}}$ to $\prod_{F \in \Gamma} x_{F,(i_t)_{t
\in F}}$. On $R_r,Q_r$ acts the group $G_r:=\prod_t \Sym([r_t])$ by
permutations of the variables, and the homomorphism defining $I_r$
is $G_r$-equivariant.  We want to let some of the $r_t$, namely,
those with $t$ in a given subset $T \subseteq [m]$, tend to infinity,
and conclude that the ideals $I_r$ stabilise up to $G_r$-symmetry. To
put this statement in the context of Chapter~\ref{ch:Chains}, we give the
$r_t$ with $t \not \in T$ some fixed values, and take the $r_t, t \in T$
all equal, say to $n \in \NN$. The corresponding rings and ideals are
called $R_n, Q_n, I_n$. We have inclusions $\iota_n: R_n \to R_{n+1}$
obtained by inclusion of variables and projections $\pi_n: R_{n+1}
\to R_n$ obtained by mapping all variables to zero that have at least
one $T$-labelled index equal to $n+1$. This maps $I_{n+1}$ into $I_n$
because there is a compatible homomorphism $Q_{n+1} \to Q_n$ setting
the relevant variables equal to zero. As in Chapter~\ref{ch:Chains}
we write $R_\infty,I_\infty$ for the union of all $R_n,I_n$.

The group $\Sym(n)^{T}$ acts on $R_n,Q_n,I_n$, but in fact the
independent set theorem only needs one copy of $\Sym(n)$ acting
diagonally. The additional assumption~\eqref{eq:Giotapi} holds by the
same reasoning as in Example~\ref{ex:Rankk}.

\begin{thm}[Independent set theorem \cite{Hillar09}.]
Suppose that $T \subseteq [m]$ is an independent set in $\Gamma$, i.e.,
that $T$ intersects any $F \in \Gamma$ in at most one element. Then
there exists an $n_0$ such that $I_n$ is generated by $\Sym(n) I_{n_0}$
for all $n \geq n_0$.
\end{thm}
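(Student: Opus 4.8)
The plan is to reduce the statement to the equivariant Noetherianity results of Chapter~\ref{ch:Groeb} by finding a well-behaved ambient ring on which $\Inc(\NN)$ (or rather $\Sym(\NN)$) acts and into which all the $I_n$ fit compatibly. The key observation, which is exactly where the independent-set hypothesis enters, is that when $T$ meets every $F \in \Gamma$ in at most one element, the monomial map defining $I_r$ behaves, as far as the $T$-indices are concerned, like the Segre-type maps of Corollary~\ref{cor:Segre}: each hyperedge $F \in \Gamma$ ``sees'' at most one of the indices that are being sent to infinity. Concretely, I would split each multi-index $(i_t)_{t \in F}$ into the part indexed by $F \cap T$ (which is either empty or a single coordinate) and the part indexed by $F \setminus T$ (which ranges over a fixed finite set, since those $r_t$ are frozen). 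Grouping the generators $x_{F,(i_t)_{t\in F}}$ of $Q_\infty$ accordingly, one sees that $Q_\infty$ is a polynomial ring in variables each of which carries at most one ``growing'' index $j \in \NN$ together with finitely many bookkeeping labels; so $Q_\infty \cong K[z_{a j} \mid a \in [k], j \in \NN]$ for a suitable finite $k$ (the $F$ with $F \cap T = \emptyset$ contribute ``constant'' variables which can be absorbed), with $\Sym(\NN)$ acting on the index $j$. By Theorem~\ref{thm:kbyNmatrices} this ring is $\Inc(\NN)$-Noetherian, hence $\Sym(\NN)$-Noetherian.

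Next I would identify $R_\infty/I_\infty = S_\infty$ with the image of the map $R_\infty \to Q_\infty$, i.e.\ with the subalgebra of $Q_\infty$ generated by the images of the $y$-variables. This subalgebra is a monomial subalgebra, and as in the proof of Corollary~\ref{cor:Segre} it is a direct summand (as a module over itself) of $Q_\infty$, with complement spanned by the monomials not in the subalgebra. Therefore $S_\infty$ is $\Sym(\NN)$-Noetherian as a ring. In particular $I_\infty$, being a $\Sym(\NN)$-stable ideal of $R_\infty$ with $\Sym(\NN)$-Noetherian quotient, is generated as a $\Sym(\NN)$-stable ideal by finitely many orbits; equivalently there is an $n_0$ with $I_\infty = \Sym(\NN)\, I_{n_0} \cdot R_\infty$. (One must check that $R_\infty$ itself is $\Sym(\NN)$-Noetherian so that ``finitely many orbits of generators'' for the ideal makes sense in the usual way; but this follows from the above applied with $\Gamma$ the discrete hypergraph, or directly from Theorem~\ref{thm:kbyNmatrices} since $R_\infty = K[y_{(i_t)} ]$ has growing indices only in the $T$-coordinates, of which there are again a bounded number relevant per orbit.)

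Finally I would feed this into Proposition~\ref{prop:Chains} and the additional hypothesis~\eqref{eq:Giotapi}, which the paragraph preceding the theorem already asserts to hold here by the same argument as in Example~\ref{ex:Rankk}. Since $G_\infty I_{n_0} = \Sym(\NN) I_{n_0}$ generates $I_\infty$ as an ideal, the strengthened conclusion of the chapter applies: for every $n \ge n_0$ the ideal $I_n$ is generated by $G_n I_{n_0} = \Sym(n) I_{n_0}$, which is the assertion of the theorem. The main obstacle I anticipate is the bookkeeping in the first step: making precise that the independent-set condition is exactly what lets one bound the number of growing indices attached to any single variable of $Q_\infty$ (and of $R_\infty$), so that Theorem~\ref{thm:kbyNmatrices} genuinely applies; if $T$ met some $F$ in two or more elements, the relevant variable would carry two growing indices and one would be in the non-Noetherian situation of Example~\ref{ex:NbyNmatrices}, so this is a genuine and delicate point rather than a formality.
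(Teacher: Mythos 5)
Your proposal has a genuine gap at the step where you conclude that $I_\infty$ is generated by finitely many $\Sym(\NN)$-orbits. You correctly observe that this would follow if the ambient ring $R_\infty$ were $\Sym(\NN)$-Noetherian, and claim this ``follows \ldots\ directly from Theorem~\ref{thm:kbyNmatrices} since $R_\infty = K[y_{(i_t)}]$ has growing indices only in the $T$-coordinates, of which there are again a bounded number relevant per orbit.'' But each variable $y_{(i_t)}$ of $R_\infty$ carries $|T|$ growing $\NN$-indices, all permuted \emph{diagonally} by $\Sym(\NN)$; Theorem~\ref{thm:kbyNmatrices} applies only when each variable carries a single growing index alongside a bounded label, and for $|T| \geq 2$ you are in exactly the non-Noetherian situation of Example~\ref{ex:NbyNmatrices}. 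So $R_\infty$ is \emph{not} $\Sym(\NN)$-Noetherian, and Noetherianity of the quotient $S_\infty$ controls only ideals \emph{containing} $I_\infty$, not $I_\infty$ itself; your ``bounded per orbit'' heuristic does not repair this.

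This is exactly the difficulty the paper's proof is designed to circumvent. Rather than seeking Noetherianity of $R_\infty$, the paper isolates the ideal $J_\infty \subseteq I_\infty$ generated by the ``obvious'' quadratic binomials \eqref{eq:Binomial}, proves by a direct combinatorial support argument (no Noetherianity needed) that $J_\infty$ is generated by $\Sym(\infty)J_{2|T|}$, and then shows that the quotient $R_\infty/J_\infty$ --- not $R_\infty$ itself --- is $\Sym(\infty)$-Noetherian. For the latter it identifies $R_\infty/J_\infty$ with a monomial subring $R'_\infty$ of an auxiliary polynomial ring $P$ whose variables $y'_{jtq}$ carry only a single growing index $q$, so that $P$ is Noetherian by Theorem~\ref{thm:kbyNmatrices}, and then descends Noetherianity to $R'_\infty$ via the torus-invariants splitting of Proposition~\ref{prop:Reynolds}. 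Combining the two facts, any $\Sym(\infty)$-stable ideal of $R_\infty$ containing $J_\infty$ is finitely generated modulo $J_\infty$ and hence finitely generated, and $I_\infty$ is such an ideal; Proposition~\ref{prop:Chains} then finishes. Your observation that the independent-set hypothesis forces one growing index per variable of $Q_\infty$ is the right structural insight and is closely parallel to the paper's use of $P$, but passing through the binomial ideal $J_\infty$, rather than invoking a false Noetherianity of $R_\infty$, is the indispensable missing step.
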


The condition that $T$ is an independent set cannot simply be dropped. For
instance, if $m=3$ and $\Gamma=\{\{1,2\},\{2,3\},\{3,1\}\}$ (the model
of {\em no three-way interaction}), then if $r_1=n$ tends to infinity
for fixed $r_2,r_3$, then the ideal stabilises (see \cite{Aoki03} for the
case of $r_2=r_3=3$); but if $r_1=r_2=n$ both tend to infinity and $r_1$
is fixed, say, to $2$, then the ideal does not stabilise \cite{Diaconis98}.

\begin{figure}
\begin{center}
\includegraphics{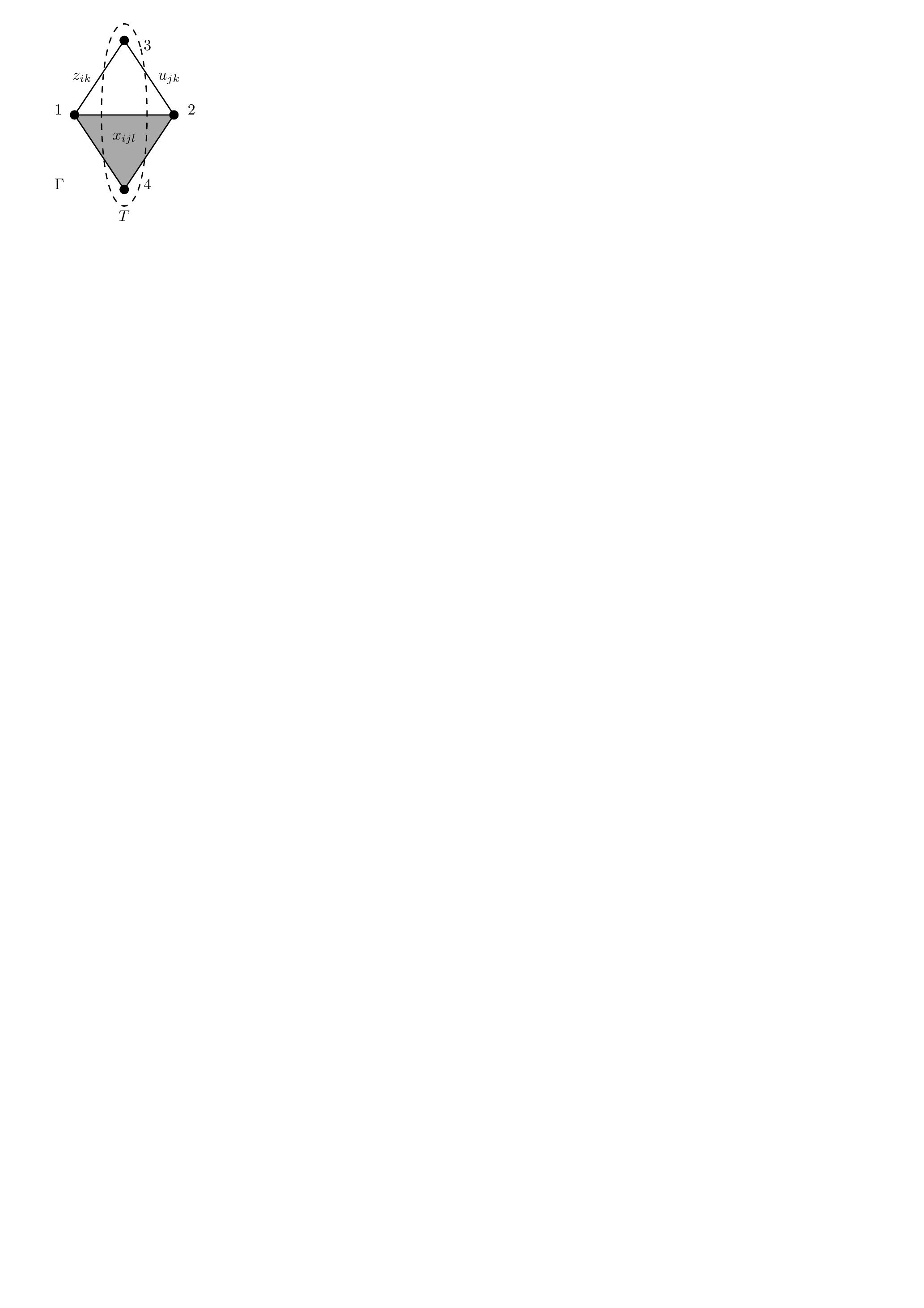}
\caption{A hypergraph $\Gamma$ with parameters
$x_{ijl},z_{ik},u_{jk}$ and independent set $T$.}
\label{fig:Gamma}
\end{center}
\end{figure}

\begin{ex}
As an example take $m=4$ and $\Gamma=\{124,13,23\}$, where
$124$ is short-hand for $\{1,2,4\}$, etc. We write 
\[ y_{ijkl}, x_{ijl},
z_{ik}, u_{jk} \text{ instead of }y_{i_1,i_2,i_3,i_4}, x_{124,(i_1,i_2,i_4)},
x_{13,(i_1,i_3)}, x_{23,(i_2,i_3)}, \]
respectively; see Figure~\ref{fig:Gamma}. The ideal
$I_n$ is the kernel of the homomorphism sending $y_{ijkl}$ to
$x_{ijl}z_{ik}u_{jk}$. Take $T=\{3,4\}$, an independent set in
$\Gamma$. The ideal $I_\infty$ contains obvious quadratic binomials
such as
\[
y_{ijkl}y_{ijk'l'}-y_{ijkl'}y_{ijk'l}
\]
with $i \in [r_1], j \in [r_2], k,l,k',l' \in \NN$. Indeed, the first monomial
maps to $x_{ijl}z_{ik}u_{jk}\cdot x_{ijl'}z_{ik'}u_{jk'}$, and the second
monomial maps to $x_{ijl'}z_{ik}u_{jk}\cdot x_{ijl}z_{ik'}u_{jk'}$, which is
the same thing.
\end{ex}

These obvious binomials generalise verbatim to the general case, where
they read
\begin{equation}  \label{eq:Binomial}
y_{jkl}y_{jk'l'} - y_{jkl'} y_{jk'l} 
\end{equation}
where now $j$ runs over the finite set $\prod_{t \in [m] \setminus T}
[r_t]$, $k,k'$ run over $\NN^S$ and $l,l'$ run over $\NN^{T \setminus
S}$ for some $S$ that runs over all subsets of $T$. Indeed, for any
variable $x=x_{F,(i_t)_{t \in F}}$ at most one $t \in F$ lies in
$T$. If such a $t$ exists for $x$ and lies in $S$, then whether $x$ appears
in the image of a variable $y_{jkl}$ does not depend on the value of
$l$. But disregarding that third index, the two monomials above are
the same. A similar reasoning for the case where $t \in T \setminus S$
and for the case where $F \cap T = \emptyset$ shows that $x$ has
the same exponent in the image of both monomials in the binomial above.

By Proposition~\ref{prop:Chains} relating chains to infinite-dimensional
varieties, we are done if we can prove that $I_\infty$ is generated by
$\Sym(\infty) I_n$ for some finite $n$. Let $J_n, J_\infty \subseteq
I_n, I_\infty$, respectively, be the ideals generated by all quadratic
binomials as in \eqref{eq:Binomial}. We claim that $J_\infty$ is
generated by $\Sym(\infty)J_{2|T|}$. Indeed, for any binomial as above,
the set of all indices appearing in $k,l,k',l'$ has cardinality
$n \leq 2|T|$, and there exists a bijection in $\Sym(\infty)$
mapping its support bijectively onto $[n]$, witnessing that the
binomial lies in $\Sym(\infty)J_{2|T|}$. The remainder of the proof
consists of showing that the quotient $R_\infty/J_\infty$ is, in fact,
$\Sym(\infty)$-Noetherian. To this end, we introduce a new
polynomial ring 
\[ P:=K\left[ y'_{jtq} \mid j \in \prod_{t \in [m] \setminus T}
[r_t],\ t \in T,\text{ and } q \in \NN \right], \] 
where the $y'_{jtq}$ are new variables, 
and consider the subring $R'_\infty$ of $P$ 
generated by all monomials $m_{ji}:=\prod_{t \in T}
y'_{jti_t}$ with $j$ as before and $i \in \NN^T$. The monomials $m_{ji}$
satisfy the binomial relations~\eqref{eq:Binomial} (for all splittings of $i$
into two subsequences $k$ and $l$), and it is known that these binomials
generate the ideal of relations among the $m_{ji}$---indeed, the ring $R'_\infty$ is the
coordinate ring of the Cartesian product of $\prod_{t \in [m] \setminus
T} r_T$-many copies of the variety of pure $|T|$-dimensional tensors. Thus
we have an isomorphism $R'_\infty \cong R_\infty/J_\infty$, and we want
to show that $R'_\infty$ is $\Sym(\infty)$-Noetherian. The enveloping
polynomial ring $P \supseteq R'_\infty$ is $\Sym(\infty)$-Noetherian
by Theorem~\ref{thm:kbyNmatrices} (only the index $q$ of the variables 
$y'_{jtq}$ is unbounded), but passing to a subring one may, in
general, lose Noetherianity. However, let $H$ be the torus in $(K^*)^T$
consisting of $T$-tuples of non-zero scalars whose product is $1$. Then
the $m_{ji}$ are $H$-invariant, and these monomials generate the ring
of $H$-invariant polynomials (if, as we may assume, $K$ is
infinite). Hence by Proposition~\ref{prop:Reynolds} we may conclude that
$R'_\infty$ is $\Sym(\infty)$-Noetherian, and this concludes the proof
of the independent set theorem.

\chapter{The Gaussian $k$-factor model} \label{ch:KFactor}

This chapter discusses finiteness results for a model from algebraic
statistics known as the Gaussian $k$-factor model.  General stabilisation
results for this model were first conjectured in \cite{Drton07}, and
for $1$ factor established prior to that in \cite{deLoera95}. For $2$
factors, a positive-definite variant was established in \cite{Drton08},
and an ideal-theoretic variant in \cite{Brouwer09e}. The ideal-theoretic
version for more factors is open, but the set-theoretic version was
established in \cite{Draisma08b}.

The Gaussian $k$-factor model consists of all covariance matrices for a
large number $n$ of jointly Gaussian random variables consistent with the
hypothesis that those variables can be written as linear combinations
of a small number $k$ of hidden factors plus independent, individual
noise. Algebraically, let $R_{n}$ be the $K$-algebra of polynomials
in variables $y_{ij}=y_{ji}$ with $i,j \in [n]$, and let $P_{kn}$ be
the $K$-algebra of polynomials in the variables $x_{il},\ i \in [n],
l \in [k]$ and further variables $z_1,\ldots,z_n$. Let $I_{kn}$ be the
kernel of the homomorphism $\phi_{kn}: R_n \to P_{kn}$ that maps $y_{ij}$
to the $(i,j)$-entry of the matrix
\[ x \cdot x^T + \diag(z_1,\ldots,z_n), \]
where we interpret $x$ as an $[n] \times [k]$-matrix of variables. Set
$S_{kn}:=R_{n}/I_{kn}$; the set $S_{kn}(K) \subseteq K^{\binom{n}{2}}$
of $K$-valued points of $S_{kn}$ is (the Zariski closure of) the Gaussian
$k$-factor model. Observe that $\phi_{kn}$ is $\Sym([n])$-equivariant,
so that $I_{kn}$ is $\Sym([n])$-stable. We are in the setting of
Chapter~\ref{ch:Chains}, with the map $\pi_n:R_{n+1} \to R_n$ mapping
$y_{i,(n+1)}$ equal to $0$ for all $i$ and the map $\iota_n:R_n \to
R_{n+1}$ the inclusion.  The technical assumption~\eqref{eq:Giotapi}
from that chapter holds for the same reason as in Example~\ref{ex:Rankk}.

\begin{thm}
For every fixed $k \in \NN$, there exists an $n_k \in \NN$ such that
for all $n \geq n_k$ the variety $S_{kn}(K) \subseteq K^{\binom{n+1}{2}}$
is cut out set-theoretically by the polynomials in $\Sym(n) I_{n_k}$.
\end{thm}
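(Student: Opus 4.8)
The plan is to follow the strategy laid out in Chapter~\ref{ch:Chains}: pass to the infinite-dimensional limit and prove that $I_\infty := \bigcup_n I_{kn}$ is cut out set-theoretically (i.e.\ $S_\infty(K)$ is characterised inside $R_\infty(K)$) by $\Sym(\infty) I_{kn_k}$ for some finite $n_k$; then apply Proposition~\ref{prop:Chains}, together with the technical assumption~\eqref{eq:Giotapi} (already verified for this chain exactly as in Example~\ref{ex:Rankk}), to descend the statement to each $S_{kn}(K)$ with $n \geq n_k$. So the whole problem reduces to a statement about the infinite symmetric matrix: the set of $K$-points of $S_\infty$, which is the closure of the set of (countably-supported) symmetric matrices of the form $xx^T + \diag(z)$ with $x$ an $\NN \times [k]$-matrix, is a $\Sym(\infty)$-Noetherian topological space, and moreover is cut out by finitely many $\Sym(\infty)$-orbits of equations.

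First I would set this up as an instance of the topological-Noetherianity machinery. Let $Y \subseteq R_\infty(K)$ be the image of the parametrisation map $K^{\NN \times [k]} \times K^{\NN} \to R_\infty(K)$ sending $(x,z)$ to the matrix $xx^T + \diag(z)$; here only finitely many coordinates of $x$ and $z$ are allowed to be nonzero, or one works with the full product space and notes the map is $\Sym(\infty)$-equivariant. The target space $R_\infty(K) = K^{(\text{symmetric }\NN\times\NN)}$ carries the $\Sym(\infty)$-action permuting indices, and the source carries the compatible diagonal action. The source $K^{\NN \times [k]} \times K^{\NN}$ is $\Sym(\infty)$-Noetherian: it is (the $K$-points of) a polynomial ring in variables $x_{il}$, $i \in \NN$, $l \in [k]$, and $z_i$, $i \in \NN$, all with bounded ``row index'' $l$ and the single unbounded index $i$, so Theorem~\ref{thm:kbyNmatrices} applies after checking that $R(K)$ is topologically Noetherian whenever $R$ is $\Inc(\NN)$-Noetherian (done in Chapter~\ref{ch:Chains}) and that $\Sym(\infty)$-stable closed sets are in particular $\Inc(\NN)$-stable. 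Then, since $\Sym(\infty)$-equivariant images of $\Sym(\infty)$-Noetherian spaces are again $\Sym(\infty)$-Noetherian, the closure $\overline{Y} = S_\infty(K)$ is $\Sym(\infty)$-Noetherian. This already gives that $S_\infty(K)$ is cut out by \emph{some} finite set of orbits of equations in $R_\infty$; choosing $n_k$ large enough that all these finitely many equations have indices in $[n_k]$, we get exactly the hypothesis of Proposition~\ref{prop:Chains} at the level of $K$-points.

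The main obstacle is the transition from ``$S_\infty(K)$ is cut out by finitely many orbits of \emph{some} equations'' to a statement that is genuinely usable: one must be careful that the equations produced are actually elements of $I_\infty = \bigcup_n I_{kn}$, i.e.\ honest polynomial relations valid for the model, rather than merely equations valid on the $K$-points of the limit. Concretely, $\Sym(\infty)$-Noetherianity of $S_\infty(K)$ as a topological space produces a closed set defined by a radical ideal of $R_\infty$ that may be strictly larger than $I_\infty$ if $K$ is not algebraically closed or $I_\infty$ is not radical; but this is harmless here because the theorem asserts only a \emph{set-theoretic} description of $S_{kn}(K)$, so one works throughout with the vanishing ideal $\mathcal{I}(S_\infty(K))$ in place of $I_\infty$, notes that it is $\Sym(\infty)$-stable and that $\pi_n,\iota_n$ interact with it correctly (appending or deleting a zero row/column of a symmetric matrix preserves membership in $S_{kn}(K)$), and re-runs Proposition~\ref{prop:Chains} with these vanishing ideals. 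The remaining routine points are: verifying the chain hypotheses ($\pi_n$ maps the vanishing ideal of $S_{k,n+1}(K)$ into that of $S_{kn}(K)$ and $\iota_n$ the reverse, both immediate from the block structure of $xx^T + \diag(z)$), and invoking the already-checked condition~\eqref{eq:Giotapi} to upgrade the conclusion ``$S_{kn}(K)$ is cut out by $\pi_{*}g f$-type functions'' to the cleaner ``$S_{kn}(K)$ is cut out by $\Sym(n) I_{n_k}$'' for $n \geq n_k$. The value of $n_k$ is not made effective by this argument — that is the price of using abstract Noetherianity rather than an explicit equivariant Gröbner basis computation as in Example~\ref{ex:OneFactor}.
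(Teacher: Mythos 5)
Your overall framework is the right one — reduce via Proposition~\ref{prop:Chains} and the condition~\eqref{eq:Giotapi} to a statement about the infinite limit — and your observation that one can work with vanishing ideals rather than the $I_{kn}$ themselves, since the theorem is set-theoretic, is correct. But the core of your proof has a genuine gap, and the paper's proof is structured precisely to avoid it.

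The gap is the step from ``the \emph{image} $Y$ of the parametrisation map is $\Sym(\infty)$-Noetherian'' to ``the \emph{closure} $\overline{Y}=S_{k\infty}(K)$ is $\Sym(\infty)$-Noetherian.'' The paper's toolbox gives you that $\Pi$-equivariant images and $\Pi$-stable subsets of $\Pi$-Noetherian spaces are $\Pi$-Noetherian, but \emph{closures} are not covered, and the inference does not hold in general: a descending chain $C_1\supseteq C_2\supseteq\ldots$ of closed $\Pi$-stable subsets of $\overline{Y}$ induces a chain $C_i\cap Y$ in $Y$ that stabilises, but since $C_i\cap Y$ need not be dense in $C_i$, stabilisation of the intersections with $Y$ does not force stabilisation of the $C_i$ themselves. (All the ``excess'' could live in $\overline{Y}\setminus Y$, which in this infinite-dimensional setting is not small.) Moreover, even if you knew $\overline{Y}$ were $\Sym(\infty)$-Noetherian as a subspace, that by itself concerns descending chains \emph{inside} $\overline{Y}$; it does not directly produce the statement you need for Proposition~\ref{prop:Chains}, namely that $\overline{Y}$ is the common vanishing locus in $R_\infty(K)$ of finitely many $\Sym(\infty)$-orbits of polynomials. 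For that you need $\overline{Y}$ to sit as a closed subset inside a $\Sym(\infty)$-Noetherian ambient space which is \emph{itself} cut out by finitely many orbits of equations, and then run a stabilisation argument on chains containing $\overline{Y}$.

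This is exactly what the paper supplies that your proposal is missing: it introduces the ``determinantal envelope'' $S'_{k\infty}(K)$, the zero locus of all off-diagonal $(k+1)\times(k+1)$-minors of the symmetric matrix $y$. This is visibly cut out by finitely many $\Sym(\infty)$-orbits of explicit equations, it contains $S_{k\infty}(K)$ because those minors lie in $I_{k\infty}$, and the paper then proves $S'_{k\infty}(K)$ is $\Sym(\infty)$-Noetherian by an induction on $k$ (slicing off the open locus where a fixed $k\times k$-minor $\Delta$ is nonzero, showing that locus is $H$-Noetherian for the stabiliser $H$ of $[2k]$ via a closed embedding into a Noetherian space of ``few coordinates,'' and invoking Proposition~\ref{prop:ZG}). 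Only then does it conclude that the closed subset $S_{k\infty}(K)\subseteq S'_{k\infty}(K)$ is cut out by finitely many orbits of equations. Your attempt to bypass this by parametrising $S_{k\infty}(K)$ directly from the $\Sym(\infty)$-Noetherian source $K^{\NN\times[k]}\times K^{\NN}$ gives Noetherianity of the image but does not survive the passage to the Zariski closure, which is where the variety $S_{k\infty}(K)$ actually lives.
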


\begin{proof}
By Proposition~\ref{prop:Chains} and the discussion following its
proof we need only prove that $S_{k\infty}(K)$ is the zero set of
$\Sym(\infty) I_{n_k}$, for suitable $n_k$. Let $J_{k\infty}$ denote
the ideal generated by all $(k+1) \times (k+1)$-minors of the
symmetric $\NN \times \NN$-matrix $y$ that do not involve diagonal
entries of $y$, and set $S'_{k \infty}:=R_{\infty} / J_{k\infty}$. Then
surely $J_{k\infty}$ is contained in $I_{k\infty}$, so that, dually,
$S'_{k \infty}(K)$ contains $S_{k \infty}(K)$.

We claim that $S'_{k \infty}(K)$ is a $\Sym(\infty)$-Noetherian
topological space, and to prove this claim we proceed by induction. For
$k=0$ the equations in $J_{k\infty}=J_{0\infty}$ force all off-diagonal
entries of the matrix $y$ to be zero, so that $S'_{0 \infty}(K)$ is just
the set of $K$-points of $K[y_{11},y_{22},\ldots]$, with $\Sym(\infty)$
permuting the coordinates. The latter ring is $\Sym(\infty)$-Noetherian
by Theorem~\ref{thm:kbyNmatrices}, and hence its topological space of
$K$-points is certainly $\Sym(\infty)$-Noetherian.

\begin{figure}
\begin{center}
\includegraphics{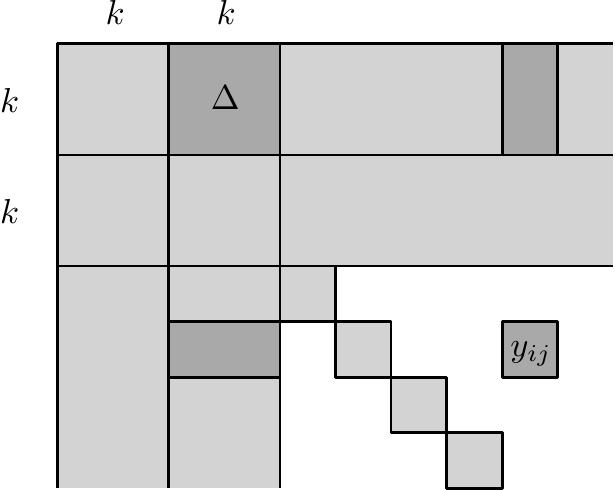}
\caption{The set $T$ labelling symmetric matrix entries (in light gray), and
the $(k+1)\times(k+1)$-determinant expressing $y_{ij}$ as a rational
function in the $T$-labelled variables (in dark gray).}
\end{center}
\end{figure}

Next, assume that $S'_{k-1,\infty}(K)$ is $\Sym(\infty)$-Noetherian,
and note that $S'_{k-1,\infty}(K)$ is a (closed) subset of
$S'_{k,\infty}(K)$. On any point outside this closed subset at least
one of the $k \times k$-determinants in $J_{k-1,\infty}$ is non-zero. Up
to signs, these determinants form a single orbit under
$\Sym(\infty)$, so if we set $\Delta:=\det
y[[k],[2k]\setminus[k]]$, then we have 
\begin{align*} S'_{k,\infty}(K)&=S'_{k-1,\infty}(K) \cup Z\Sym(\infty),
\text{ where}\\
Z&=\{y \in S'_{k \infty}(K) \mid \Delta \neq 0 \}.
\end{align*}
The union of two $\Sym(\infty)$-Noetherian topological spaces is
$\Sym(\infty)$-Noetherian, so it suffices to prove that $Z\Sym(\infty)$
is $\Sym(\infty)$-Noetherian. Observe that $Z$ itself is stable under the
subgroup $H:=\{\pi \in \Sym(\infty) \mid \pi|_{[2k]}=1|_{[2k]}\}$. Hence
by Proposition~\ref{prop:ZG} it suffices to prove that $Z$ is
$H$-Noetherian. To this end, define $T \subseteq \NN \times \NN$ by
\[ T:=\{(i,j) \in \NN \times \NN \mid i=j \text{ or } (i<j \text{ and }
i \in [2k])\}. \]
Let $Q$ be the open subset of $K^T$ where the $[k]
\times ([2k]\setminus [k])$-submatrix has non-zero determinant. The
coordinate ring of $Q$ is $H$-Noetherian by
Theorem~\ref{thm:kbyNmatrices} and
the fact that adding finitely many $H$-fixed variables preserves
$H$-Noetherianity. As a consequence, $Q$ is an $H$-Noetherian space. We
claim that the projection $\pr: Z \to Q$ that maps a matrix $y$ to
its $T$-labelled entries is a closed, $H$-equivariant embedding.
Equivariance is immediate. To see that $\pr$ is injective observe that,
for $y \in Z$, any matrix entry $y_{ij}$ with $i < j$ (since we work
with symmetric matrices) and $i \not \in [2k]$ satisfies an equation
\[0=\det y\left[[k] \cup \{i\},([2k] \setminus [k])\cup \{j\}\right] 
= \Delta \cdot y_{ij} - E, \]
where $E$ is an expression involving only variables in $T$. Since
$\Delta$ is non-zero, we find that $y_{ij}$ is
determined by $\pr(y)$. This shows injectivity. That $\pr:Z \to Q$
is, in fact, a closed embedding follows by showing that the dual map $K[Q]
\to K[Z]$ is surjective: the regular function $\frac{E}{\det \Delta}$ maps
onto $y_{ij}|_Z$. Since $Q$ is $H$-Noetherian, so is $Z$, and
as mentioned before this concludes the induction step. 

As $S'_{k,\infty}(K)$ is Noetherian, we find that in particular, the
Zariski closure $S_{k,\infty}(K)$ is cut out from $S'_{k,\infty}(K)$
by finitely many $\Sym(\NN)$-orbits of equations. Representatives of
these orbits already lie in $S'_{k,n_k}(K)$ for suitable $n_k$.
\end{proof}

\chapter{Tensors and $\Delta$-varieties} 
\renewcommand{\Seg}{\mathbf{Seg}}

This chapter deals with finiteness results for a wide class of varieties
of tensors, introduced by Snowden \cite{Snowden10} and called {\em
$\Delta$-varieties}.  The proof of this chapter's theorem is more
involved than earlier proofs, and we have therefore decided to break 
the chapter up into more digestible sections.

\subsection*{$\Delta$-varieties} 
We work over a ground field $K$, which we assume to be infinite
to avoid anomalies with the Zariski topology. 
For any tuple
$(V_1,\ldots,V_n)$ of finite-dimensional vector spaces over $K$,
we write $\bV(V_1,\ldots,V_{n})$ for the tensor product $V_1^*
\otimes \cdots \otimes V_{n}^*$. These spaces have three types of
interesting maps between them. First, given linear maps $f_i:V_i
\to W_i$ there is a natural linear map $\bV(W_1,\ldots,W_{n}) \to
\bV(V_1,\ldots,V_{n})$, namely, the tensor product $\otimes_i f_i^*$ of
the dual maps $f_i^*$. Second, given any $\sigma \in \Sym([n])$, there
is a canonical map $\sigma:\bV(V_{\sigma(1)},\ldots,V_{\sigma(n)})\to
\bV(V_1,\ldots,V_{n})$. Third, there is a canonical {\em flattening}
map $\bV(V_1,\ldots,V_{n},V_{n+1}) \to \bV(V_1,\ldots,V_{n} \otimes
V_{n+1})$, which is called like this because, in coordinates, it takes
an $(n+1)$-way table of numbers and transforms it into an $n$-way table;
see Figure~\ref{fig:flattening}.

\begin{figure}
\begin{center}
\includegraphics{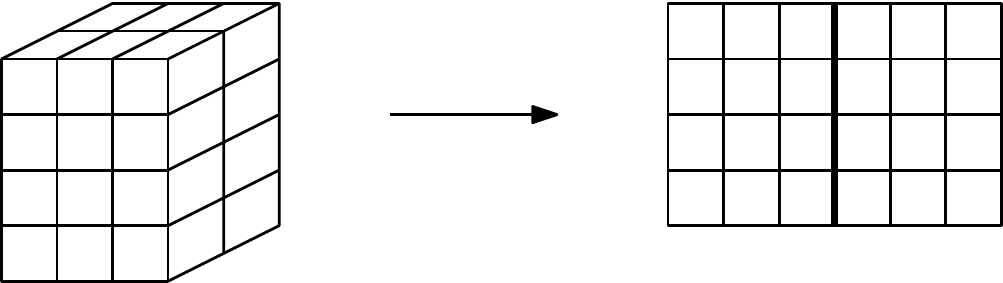}
\caption{Flattening an element of $K^4 \otimes K^2 \otimes K^3$ to an
element of $K^4 \otimes (K^2 \otimes K^3)$.}
\label{fig:flattening}
\end{center}
\end{figure}

A $\Delta$-variety is not a single variety, but rather a rule
$\bX$ that takes as input a finite sequence $(V_j)_{j \in [n]}$
of finite-dimensional vector spaces over $K$, and assigns to it a
subvariety $\bX(V_1,\ldots,V_{n})$ of $\bV(V_1,\ldots,V_{n})$. To
be a $\Delta$-variety, each of the three types of maps
above must preserve $\bX$, i.e., $\otimes_i f_i^*$ must map
$\bX(W_1,\ldots,W_{n})$ into $\bX(V_1,\ldots,V_{n})$, and
$\sigma$ must map $\bX(V_{\sigma(1)},\ldots,V_{\sigma(n)})$
into $\bX(V_1,\ldots,V_{n})$, and the flattening map must map
$\bX(V_1,\ldots,V_{n+1})$ into $\bX(V_1,\ldots,V_{n} \otimes V_{n+1})$.

The $\Delta$-varieties that we shall study will have a fourth,
additional property, namely, that the inverse to the isomorphism
$\bV(V_1,\ldots,V_{n},K) \to \bV(V_1,\ldots,V_{n} \otimes
K)=\bV(V_1,\ldots,V_{n})$ maps $\bX(V_1,\ldots,V_{n})$ into
$\bX(V_1,\ldots,V_{n},K)$; we call such $\Delta$-varieties {\em good}.
Taking any linear function $f$ from an additional vector space $V_{n+1}$
to $K$ we then find that $\bX(V_1,\ldots,V_{n}) \otimes f$,
being the image of $\bX(V_1,\ldots,V_{n})$ under the map
above followed by
$1_{V_1}^* \otimes \cdots \otimes 1_{V_{n}}^* \otimes f^*$, is
contained in $\bX(V_1,\ldots,V_{n},V_{n+1})$.
A (boring) example of a $\Delta$-variety that is not good is
that for which $\bX(V_1,\ldots,V_{n})$ equals $\bV(V_1,\ldots,V_{n})$
if $n<10$ and the empty set otherwise.

A typical example of a good $\Delta$-variety is $\Seg$, the cone over
Segre, which maps a tuple of vector spaces to the variety of pure tensors
$v_1 \otimes \cdots \otimes v_{n}$ in the tensor product of the duals. In
fact, any non-empty good $\Delta$-variety contains $\Seg$---but the class
of good $\Delta$-varieties is much larger. For instance, it is closed
under taking joins: if $\bX$ and $\bY$ are $\Delta$-varieties, then the
rule $\bX+\bY$ that assigns to $(V_i)_{i \in [n]}$ the Zariski closure of
$\bX(V_1,\ldots,V_{n})+\bY(V_1,\ldots,V_{n})$ is also a $\Delta$-variety,
and good if both $\bX$ and $\bY$ are. Similarly, (good) $\Delta$-varieties
are closed under taking tangential varieties, unions, and intersections.

Given some equations for an instance of a $\Delta$-variety $\bX$,
one obtains equations for other instances of $\bX$ by pulling
back along sequences of the maps appearing in the definition
of a $\Delta$-variety. For instance, start with the $2 \times
2$-determinant defining $\Seg(K^2,K^2)$ inside $\bV(K^2,K^2)$. Then
we obtain generators for the ideal of $\Seg(K^m,K^n)$ by pulling
the determinant back along duals of linear maps $f_1:K^2 \to K^m$ and
$f_2:K^2 \to K^n$.\footnote{Snowden chose the notion of $\Delta$-varieties
contravariant in the linear maps $f_i$ so as to make defining ideals and
more general {\em $\Delta$-modules} \cite{Snowden10} depend covariantly on
them.} And then, using the remaining two axioms, we also find equations
for the variety of pure tensors in, say, $\bV(K^2,K^2,K^2,K^2)$
through the flattening maps into $\bV(K^2 \otimes K^2,K^2 \otimes
K^2) \cong \bV(K^4,K^4)$, $\bV(K^2, K^2 \otimes K^2 \otimes K^2) \cong
\bV(K^2,K^8)$, etc. Indeed, one readily shows that one obtains generators
of the ideals of all instances of $\Seg$ in this manner.  The result in
this chapter is that a similar result holds for any sufficiently small
good $\Delta$-variety, at least at a topological level.

\begin{thm} \label{thm:Delta}
Let $\bX$ be a good $\Delta$-variety which is {\em bounded} in the
sense that there exist finite-dimensional vector spaces $W_1,W_2$ such
that $\bX(W_1,W_2)$ is not all of $\bV(W_1,W_2)$. Then there exist an
$n_\bX \in \NN$ and vector spaces $U_1,\ldots,U_{n_\bX}$ such that
$\bX$ equals the inclusion-wise largest $\Delta$-variety $\bY$ with
$\bY(U_1,\ldots,U_{n_\bX})=\bX(U_1,\ldots,U_{n_\bX})$.
\end{thm}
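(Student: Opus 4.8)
The plan is to reduce the statement, via the machinery of Chapter~\ref{ch:Chains} and Proposition~\ref{prop:Chains}, to a single $\Pi$-Noetherianity statement for a suitable projective limit of the ambient tensor spaces, and then to establish that Noetherianity by exhibiting the limit of $\bX$ as a $\Pi$-equivariant image (or closed $\Pi$-stable subset) of a space already known to be $\Inc(\NN)$- or $\Syminf$-Noetherian. Concretely, first I would fix the format: using goodness, $\bX$ on any tuple $(V_1,\dots,V_n)$ is determined by $\bX$ on a tuple where all $V_j=K^{d}$ for a common large $d$, because every finite-dimensional $V_j$ embeds into $K^d$ and $\bX$ is contravariant under the duals of linear maps, while the extra flattening and "pad with $K$" axioms let us compare different values of $n$. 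So I would set up a chain: let $V_i$ be a vector space of growing dimension (say $\dim V_i = i$, or keep the dimension in each factor fixed and let the \emph{number} of factors grow, or both), with $R_i$ the coordinate ring of $\bV(V_1,\dots,V_n)$ for the $i$-th choice, linked by the pullbacks of coordinate inclusions $\iota$ and the coordinate projections $\pi$ (padding by zeros), exactly as in Example~\ref{ex:Rankk}. The symmetry group $G_i$ is a product of general linear groups on the factors together with the symmetric group permuting factors of equal dimension; the flattening maps are what make the limit genuinely "infinite tensor" rather than a mere matrix limit, and the technical condition~\eqref{eq:Giotapi} should hold by a book-keeping argument just as in Example~\ref{ex:Rankk}.

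The heart of the matter is to prove that the limiting variety $\bX_\infty$, living inside the limiting ambient space $R_\infty(K)$, is a $G_\infty$-Noetherian topological space; then Proposition~\ref{prop:Chains} and the discussion after its proof deliver the finitely many orbits of equations, i.e.\ the existence of $n_\bX$ and $U_1,\dots,U_{n_\bX}$ such that $\bX$ is the largest $\Delta$-variety agreeing with $\bX$ on that tuple. For the Noetherianity, the \emph{boundedness} hypothesis is the crucial input: since $\bX(W_1,W_2)\subsetneq\bV(W_1,W_2)$ for some $W_1,W_2$, there is a nonzero form $f$ in the coordinate ring of $\bV(W_1,W_2)$ vanishing on $\bX(W_1,W_2)$, and pulling $f$ back through all the $\Delta$-variety maps produces a large supply of equations on every instance. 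I expect the right strategy is an induction on (a measure of) the size of $\bX$ — e.g.\ on the smallest dimensions of $W_1,W_2$ witnessing boundedness, or on the codimension of $\bX(W_1,W_2)$ — mimicking the inductive proof in Chapter~\ref{ch:KFactor}: stratify $\bX_\infty$ by where some particular pulled-back minor-type function is nonzero; on the open stratum use that the nonvanishing of that function lets one solve for "most" tensor coordinates as rational functions of finitely many others (a parametrisation by a bounded-width slab of coordinates), reducing to a space of the form $K[x_{ij}\mid i\in[k],j\in\NN]$ which is $\Inc(\NN)$-Noetherian by Theorem~\ref{thm:kbyNmatrices}, and then invoke Proposition~\ref{prop:ZG} to spread this over the full orbit; on the closed stratum, the remaining $\Delta$-variety is strictly smaller (still good, still bounded) and we apply the inductive hypothesis. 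Proposition~\ref{prop:Reynolds} should handle the places where the parametrising coordinates are only available after quotienting by a torus or by a classical group, exactly as in Corollaries~\ref{cor:Segre} and following.

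The step I expect to be the main obstacle is making the "solve for most coordinates" reduction precise and \emph{uniform} in the growing format — i.e.\ showing that on the open stratum $\bX_\infty$ really does embed $G_\infty$-equivariantly (as a closed subset, or at least as an equivariant image) into a Noetherian model space of the kind in Theorem~\ref{thm:kbyNmatrices}. This requires the combinatorial verification that the flattening axiom, when unfolded through the limit, still lets one express arbitrarily large "directions" of the tensor in terms of a fixed finite collection, and that the equivariance of the relevant maps is compatible with $\Inc(\NN)$ acting on a single unbounded index; it is essentially the analogue of the $T$-labelled-slab argument in Chapter~\ref{ch:KFactor} but now the slab has to be chosen to be closed under the $\Delta$-variety maps, and checking closedness and injectivity of the resulting projection $\pr$ is where the real work lies. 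A secondary but nontrivial point is to nail down, from the abstract $\Delta$-variety axioms, the precise claim that "good + bounded" implies $\bX$ is determined by a single instance — this is the translation between the topological Noetherianity of $\bX_\infty$ and the maximal-$\Delta$-variety phrasing in the statement, and one must be careful that "largest $\Delta$-variety agreeing on $(U_1,\dots,U_{n_\bX})$" is well-defined (it is, as an intersection, using that $\Delta$-varieties are closed under intersections) and that it indeed recovers $\bX$ rather than something strictly larger.
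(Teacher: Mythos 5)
Your proposal is on the right track at the level of scaffolding (the Chapter~\ref{ch:Chains} machinery, the reduction to a single $\Pi$-Noetherianity statement, the stratify-and-use-\ref{prop:ZG} pattern), but it misses two specific ideas without which the argument cannot be carried out, and proposes one induction that the paper deliberately avoids.

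First, the paper does \emph{not} induct on any structural feature of $\bX$ itself (dimensions of $W_1,W_2$, codimension, etc.). Instead it introduces an auxiliary family of ``flattening varieties'' $\bY^{(k)}$, where $\bY^{(k)}$ is the largest $\Delta$-variety whose two-factor instances consist of tensors of rank at most $k$; these are cut out by $(k+1)\times(k+1)$-minors of all flattenings. The boundedness of $\bX$ yields a $p$ with $\bX_\infty \subseteq Y^{(p-1)}_\infty$, and the induction is on $k$ for the $Y^{(k)}_\infty$, not on $\bX$. This is a genuinely different (and more robust) inductive structure: the closed complement of the open stratum where some $k\times k$-flattening minor is nonzero is precisely $Y^{(k-1)}_\infty$, so the induction is well-founded, whereas it is unclear how your ``smaller $\bX$'' on the closed stratum would remain a good bounded $\Delta$-variety with a strictly smaller invariant. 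Once $Y^{(p-1)}_\infty$ is shown $G_\infty$-Noetherian, every closed $G_\infty$-stable subset of it, in particular $X_\infty$, is automatically cut out by finitely many orbits of equations; no induction on $\bX$ is needed.

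Second, and this is the larger gap, you do not anticipate the step where one must show that $Y^{(k)}_\infty$ is \emph{itself} defined inside $\bV_\infty$ by finitely many $G_\infty$-orbits of equations, i.e.\ that membership in $Y^{(k)}_n$ can be detected, for $n$ large, by contracting one tensor factor and testing membership in $Y^{(k)}_{n-1}$. This has no analogue in Chapters~\ref{ch:KFactor} or \ref{ch:Chains} and cannot be obtained by pattern-matching. The paper proves it (for $n>2k$) via Borel's fixed point theorem: a $(k+1)$-dimensional subspace of $(V^*)^{\otimes B}$ for which every single-position contraction drops dimension would have a torus-fixed representative, i.e.\ a set of $k+1$ distinct words over $\{0,\dots,p-1\}$ of length $\geq k+1$ such that for each position two of the words differ only there, and an elementary combinatorial induction shows no such set exists. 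Without this lemma the reduction to the flattening variety is not usable, because one cannot pass from ``$X_\infty$ is a closed $G_\infty$-stable subset of the Noetherian space $Y^{(p-1)}_\infty$, hence cut out there by finitely many orbits'' to ``$X_\infty$ is cut out in $\bV_\infty$ by finitely many orbits.'' Your proposal correctly flags the closed-embedding/coordinate-solving step as a main obstacle (and your description of it matches the paper's handling inside each $Z_a$), but without the flattening-variety filtration and the Borel/words lemma as the organizing devices, the proof does not close. Finally, a minor point: the paper's Noetherianity argument for $Y^{(k)}_\infty$ does not invoke Proposition~\ref{prop:Reynolds}; the closed embedding $\pr:Z_a\to Q$ alone suffices, so the invariant-theoretic detour you suggest is not needed here.
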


This means, in more concrete terms, that the equations for
$\bX(U_1,\ldots,U_{n_\bX})$, pulled back along all four types of linear
maps from the definition of a good $\Delta$-variety, yield equations
that cut out all instances of $\bX$ from their ambient spaces. In
particular, there is a universal degree bound, depending only on $\bX$
but not on $n$ or $V_1,\ldots,V_{n}$, on equations needed to define
$\bX(V_1,\ldots,V_{n})$ set-theoretically within
$\bV(V_1,\ldots,V_{n})$. 

Since $\GL(W_1) \times \GL(W_2)$ acts with a dense orbit on
$\bV(W_1,W_2)$---namely, the two-tensors (or matrices) of full rank---the
boundedness condition on $X$ implies that all two-tensors in instances of
the form $\bX(V_1,V_2)$ have uniformly bounded rank. This readily implies
that the boundedness condition on $\bX$ is also preserved under joins (by
adding the rank bounds), tangential varieties, intersections, and unions,
so that the theorem applies to a wide class of $\Delta$-varieties of
interest in applications. 

\subsection*{Related literature}
The boundedness condition on $\Delta$-varieties was first formulated, at
an ideal-theoretic level, in \cite{Snowden10}.  There it is conjectured
that a generalisation of Theorem~\ref{thm:Delta} should hold, for
bounded $\Delta$-varieties, on the ideal-theoretic level; and not only
for equations of instances of $\bX$, but also for their $q$-syzygies
for any fixed $q \geq 1$. This general statement is proved for $\Seg$
in \cite{Snowden10}. The special case where $q=1$, i.e., finiteness
of equations, is known to hold for the tangential variety to $\Seg$
\cite{Oeding11}, confirming a conjecture from \cite{Landsberg07};
and for the variety $\Seg+\Seg=2\Seg$ of tensors of border rank
at most $2$ \cite{Raicu10}, confirming the GSS-conjecture from
\cite{Garcia05} (a set-theoretic version of which was first proved
in \cite{Landsberg04}). The set-theoretic theorem above was first
proved in \cite{Draisma11d} for $k\Seg$, i.e., for any fixed secant
variety of $\Seg$; and a discussion with Snowden led to the insight
that our proof generalises to bounded, good, $\Delta$-varieties as
in the theorem. Further recent keywords closely related to the topic
of this chapter are $\GL_\infty$-algebras \cite{Sam12},
twisted commutative algebras \cite{Sam12b}, FI-modules
\cite{Church12}, and cactus varieties \cite{Buczynska13}.

\subsection*{From a $\Delta$-variety to an infinite-dimensional variety}

We prove the theorem by embedding all relevant instances of $\bX$ into a
single, infinite-dimensional variety given by determinantal equations,
and showing that this variety is Noetherian up to symmetry preserving
$\bX$. By the boundedness assumption, we can choose a number $p$ that
is strictly greater than the ranks of all two-tensors in instances
$\bX(V_1,V_2)$, independently of $V_1$ and $V_2$. Set $V:=K^{[p]}$ and
$X_n:=\bX(V,\ldots,V) \subseteq \bV_n:=\bV(V,\ldots,V)$, where the
number of $V$s equals $n$. We first argue that the equations for all
(infinitely many) $X_n, n \in \NN$ pull back to equations defining all
instances of $\bX$.

Indeed, let $V_1,\ldots,V_{n}$ be vector spaces, and let $\omega
\in \bV(V_1,\ldots,V_{n})$ be a tensor.  Then we claim that $\omega$
lies in $\bX(V_1,\ldots,V_{n})$ if and only if for all linear maps
$f_i:V \to V_i$ the image of $\omega$ under $\otimes_{i \in [n]} f_i^*$
lies in $X_n$. The ``only if'' claim follows from the first axiom for
$\Delta$-varieties. For the ``if'' claim, note that if $(\otimes_{i \in
[n]} f_i^*)\omega$ lies in $X_n$ for all tuples of $f_i$, then for each $j
\in [n]$, the linear map that $\omega$ induces from $\bigotimes_{i \neq j}
V_i$ into $V_j^*$, being a flattening of $\omega$ in $\bV(\bigotimes_{i
\neq j} V_i,V_j)$, has image $U_j \subseteq V_j^*$ of dimension strictly
smaller than $p$.  Now take $f_j:V \to V_j$ such that $f_j^*$ restricts
to an injection $U_j \to V^*$, and let $g_j:V_j \to V$ be such that
$g_j^* \circ f_j^*$ restricts to the identity on $U_j$.  Then the tensor
$\omega':=\otimes_{j} f_j^* \omega$ lies in $X_n=\bX(V,\ldots,V)$ by
assumption. But then, by the first axiom, the tensor $\otimes_j g_j^*
\omega'=\omega$ lies in $\bX(V_1,\ldots,V_{n})$, as claimed. This
argument actually also works ideal-theoretically; only later shall we
need to work purely topologically.

We now cast the chain of varieties $(X_n)_{n \in \NN}$ into the framework
of Chapter~\ref{ch:Chains}. To this end, let $R_n$ denote the symmetric
$K$-algebra generated by $V^{\otimes [n]}$, which is the coordinate ring
of $\bV_n$. Pick a non-zero element $x_0 \in V$
and let $\iota_n:R_n \to R_{n+1}$ be the homomorphism of $K$-algebras
determined by the linear map $V^{\otimes[n]} \to V^{\otimes
[n+1]},\ x
\mapsto x \otimes x_0$. The group $\GL(V)^{[n]}$ acts on $V^{\otimes
[n]}$ in the natural manner, and this extends to an action by
algebra automorphisms on $R_n$. Similarly, the group $\Sym([n])$
acts on $V^{\otimes [n]}$ by permuting tensor factors. The embedding
$\iota_n$ is equivariant for the group $G_n:=\Sym([n]) \ltimes
\GL(V)^{[n]}$ if we embed $\Sym([n])$ into $\Sym([n+1])$ by fixing 
$n$ and $\GL(V)^{[n]}$ into $\GL(V)^{[n+1]}$ by adding $1_V$ in the
last component.

The linear map $\iota^*: \bV_{n+1} \to \bV_n$ maps $X_{n+1}$ into $X_n$,
and $X_n$ is preserved by $G_n$. Letting $S_n$ be the coordinate ring of
$R_n$, we have all the arrows in the diagram of Figure~\ref{fig:Chains}
except for the arrows to the right. To obtain these, we use that $\bX$
is good, as follows. Given any $e_0 \in V^*$ such that $e_0(x_0) =1$,
the map $\pi_n^*:\bV_n \to \bV_{n+1}, \omega \mapsto \omega \otimes e_0$
maps $X_n$ into $X_{n+1}$. The dual to this linear map, extended to an
algebra homomorphism, is the required map $\pi:R_{n+1} \to R_n$. This
completes the diagram. The technical condition \eqref{eq:Giotapi} from
page~\pageref{eq:Giotapi} is also satisfied, i.e., for all indices
$l,i_0,i_1$ with $l \geq i_0,i_1$ and for all $g \in G_l$ there exist
an index $j \leq i_0,i_1$ and group elements $g_0 \in G_{i_0}, g_1 \in
G_{i_1}$ such that
\[ 
(\pi_{li_1} \ g \ \iota_{i_0 l})^* = (g_1 \  \iota_{ji_1} \  \pi_{i_0 j} \
g_0)^*.
\]
Indeed, the left-hand side is the composition of the map $\bV_{i_1} \to
\bV_{l}$ tensoring with $e_0^{\otimes l-i_1}$, followed by $g$, followed
by the map $\bV_l \to \bV_{i_0}$ contracting with $x_0^{\otimes l-i_0}$
in the last $l-i_0$ factors. Let $i_1-j$ be the number of factors $V^*$
in $\bV_{i_1}$ that are moved, by $g$, into the last $l-i_0$ positions
and hence end up being contracted in the last step. This means that
$j \leq i_0,i_1$ is the number of factors $V^*$ in $\bV_{i_1}$ that
are not contracted. Hence the composition can also be obtained by first
applying a $g_1 \in G_{i_1}$, ensuring that the $i_1-j$ factors $V^*$ in
$\bV_{i_1}$ that need to be contracted are in the last $i_1-j$ positions;
then contracting by a pure tensor in those positions, which for suitable
choice of $g_1$ may be chosen $x_0^{\otimes i_1-j}$; then tensoring with
$i_0-j$ copies of $e_0$; and finally applying a suitable element $g_0
\in G_{i_0}$.

The upshot of this is that if we can prove that the projective
limit $X_\infty:=\lim_{\ot n} X_n$ is defined by finitely
many $G_\infty:=\bigcup_n G_n$-orbits of equations within
$\bV_\infty:=\lim_{\ot n} \bV_n$, then there exists an $n_X$ such that
for all $n \geq n_X$ the variety $X_n$ is defined by the $G_n$-orbits
of equations for $X_{n_X}$. This implies the theorem.

\subsection*{Flattening varieties}

To prove this finiteness result, then, we show that $X_\infty$
is contained in a $G_\infty$-Noetherian subvariety $Y_\infty$ of
$\bV_\infty$, which we call a flattening variety, and that $Y_\infty$
itself is defined by finitely many $G_\infty$-orbits of equations. To
define $Y_\infty$, let $\bY^{(k)}$ denote the largest $\Delta$-variety
for which $\bY^{(k)}(V_1,V_2)$ consists of two-tensors of rank at
most $k$. Then $\bY^{(k)}(V_1,\ldots,V_n)$ is defined by the vanishing of
all $(k+1) \times (k+1)$-minors of the flattenings $\bV(V_1,\ldots,V_n)
\to \bV(\bigotimes_{i \in A}V_i,\bigotimes_{i \in B}V_i)$ for all
partitions of $[n]$ into disjoint subsets $A$ and $B$.
Set $Y^{(k)}_n:=\bY^{(k)}(V,\ldots,V) \subseteq
\bV_n$, and $Y^{(k)}_\infty:=\lim_{\ot n} Y^{(k)}_n \subseteq \lim_{\ot n}
\bV_n$. By the boundedness assumption on $\bX$, $Y^{(p-1)}_\infty$ contains
$X_\infty$.

We first prove that each $Y^{(k)}_\infty, k \in \NN$ is defined by
finitely many $G_\infty$-orbits of equations. Unwinding the definitions,
this statement boils down to the statement that if $\omega \in \bV_n$
with $n \gg 0$ does not lie in $Y^{(k)}_n$, then there exists an $i \in
[n]$ and an $x \in V$ such that contracting $\omega$ with $x$ in the
$i$-th position yields a tensor $\omega' \in \bV_{n-1}$ that does not
lie in $Y^{(k)}_{n-1}$. In fact, we shall see that $n>2k$ suffices. The
condition that $\omega$ does not lie in $Y^{(k)}_n$ means that there is
a partition $[n]=A \cup B$ such that $\omega$, regarded as a linear map
$V^{\otimes A} \to (V^*)^{\otimes B}$, has rank strictly larger than
$k$. Using that $n>2k$ and after swapping $A$ and $B$ if necessary we
may assume that $|B|>k$.

Let $U \subseteq (V^*)^{\otimes B}$ be a $(k+1)$-dimensional subspace
of the image of this linear map $\omega$. We claim that since $|B|$ is
larger than $k$, there exists a position $i \in B$ and an $x \in V$
such that the image of $U$ under contraction with $x$ in the $i$-th
position still has dimension $k+1$. Indeed, otherwise $U$ would be a
point in the projective variety
\begin{align*} Q:=\{W \in \Gr_{k+1}(V^*)^{\otimes B}
	 \mid &\text{ contracting $U$ with any $x$ in any
	position}\\ &\text{ decreases the dimension}\}. 
\end{align*}
We claim that this variety is empty. To prove this, extend the
distinguished vector $x_0 \in V$ from the definition of $\bV_\infty$
to a basis $x_0,\ldots,x_{p-1}$, where the distinguished $e_0
\in V^*$ vanishes on $x_1,\ldots,x_{p-1}$. Then the basis of
$V^*$ dual to $x_0,\ldots,x_{p-1}$ starts with $e_0$; denote
it $e_0,\ldots,e_{p-1}$.\footnote{The reason for labelling with
$\{0,\ldots,p-1\}$ rather than $[p]$ will become apparent soon.}

If $Q$ is not empty, then by Borel's fixed point theorem \cite{Borel91}
$Q$ contains a $T^B$-fixed point $W$, where $T$ is the maximal torus
in $\GL(V)$ consisting of invertible linear maps whose matrices with
respect to $x_0,\ldots,x_{p-1}$ are diagonal. This means that $W$ has a
basis of common eigenvectors $e_\alpha:=\otimes_{i \in B} e_{\alpha_i}$,
where $\alpha$ runs through some set $J \subseteq \{0,\ldots,p-1\}^B$
of cardinality $k+1$. Think of the $\alpha \in J$ as $B$-labelled
words over the alphabet $\{0,\ldots,p-1\}$.  Contracting $e_\alpha$
with $x_0+x_1+\ldots+x_{p-1}$ at position $i \in B$ yields $e_{\alpha'}$,
where $\alpha'$ is the word obtained from $\alpha$ by deleting the $i$-th
letter. By assumption, the resulting words $e_{\alpha'}, \alpha \in J$ are
linearly dependent, which means that at least two of them must coincide.

Summing up, $J$ consists of $k+1$ distinct words of length $|B|\geq k+1$
with the property that for each $i \in B$ the collection $J$ contains two
words that differ only at position $i$. By induction on $k$ we show that
this is impossible, i.e., that for $k+1$ distinct words of length $\geq
k+1$ over any alphabet there exist $k$ positions restricted to which all
words are distinct. For $k=0$ this is immediate: restricting a single
word to zero positions yields a single (empty) word. Assume that it is
true for $k-1$, and consider $k+1$ words of length $\geq k+1$. Set one
word $\alpha$ apart. Then there exist $k-1$ positions restricted to which
the remaining $k$ words are distinct. Restricted to those $k-1$ positions
$\alpha$ equals at most one word $\alpha'$ of the remaining words. So
by adding to the $k-1$ positions a position where $\alpha$ and $\alpha'$
differ we obtain $k$ positions restricted to which all words are distinct.

This contradiction shows that there exists an $i \in B$ and an $x \in
V$ such that the contraction of $U$ with $x$ at position $i$ still
has dimension $k+1$. As a consequence, contracting $\omega$ with $x$
at position $i$ yields a tensor outside $Y^{(k)}_{n-1}$, as claimed.
Thus $Y^{(k)}_\infty$ is defined by finitely many $G_\infty$-orbits of
equations. 

The variety $Y^{(k)}_\infty$ is defined by the vanishing
of $(k+1) \times (k+1)$-determinants. For what follows, it will be
convenient to understand these explicitly in terms of coordinates. 
The basis $x_0,\ldots,x_{p-1}$ gives rise to a basis $x_w, w \in
\{0,\ldots,p-1\}^{[n]}$ of $V^{\otimes [n]}$. The ring $R_n$ is the
polynomial ring in these variables. Under the embedding $\iota_n: R_n \to
R_{n+1}$ the variable $x_w$ is mapped to $x_{w0}$. Hence $R_\infty$ is
the polynomial ring in variables $x_w$ where $w$ runs over all infinite
words in $\{0,\ldots,p-1\}^\NN$ of finite support $\supp(w):=\{j
\in \NN \mid w_j \neq 0\}$; let us call these finitary words. In
these coordinates, a determinantal equation for $Y^{(k)}_\infty$ looks as
follows. Fix $k+1$ finitary words $w_i, i \in [k+1]$ and $k+1$ further
finitary words $w'_j,j \in [k+1]$ with the requirement that $\supp(w_i)
\cap \supp(w'_j)=\emptyset$ for all $i,j$. Then form the square matrix
\[ x[(w_i)_i, (w'_j)_j]:=(x_{w_i+w'_j})_{i,j \in [k+1]} \]
and its determinant 
\[ \Delta[(w_i)_i, (w'_j)_j]:=\det x[(w_i)_i, (w'_j)_j]. \]
All determinants defining $Y^{(k)}_\infty$ have this form.\footnote{The
convenient fact that the sum of two finitary words is again finitary
explains our choice of labelling $x_0,\ldots,x_{p-1}$.}

\subsection*{Noetherianity of flattening varieties}
Using this explicit understanding of the defining equations
for $Y^{(k)}_\infty$, we prove that $Y^{(k)}_\infty$, with its
Zariski-topology, is $G_\infty$-Noetherian. The proof is similar
to that in Chapter~\ref{ch:KFactor} for the Gaussian $k$-factor
model. In particular, we proceed by induction on $k$. For $k=0$
the variety $Y^{(0)}_\infty$ consists of a single point, namely,
$0$, and is certainly Noetherian. Now assume that $Y^{(k-1)}_\infty$
is $G_\infty$-Noetherian. By the discussion of flattening varieties,
$Y^{(k-1)}_\infty$ is defined by the orbits of finitely many
$k \times k$-determinants of flattenings, say $q$ of them. Let $\Delta_a,\
a \in [q]$ be those determinants. Then we may write
\begin{align*} 
Y^{(k)}_\infty & := Y^{(k-1)}_\infty \cup \bigcup_{a \in [q]} Z_a G_\infty, 
\text{ where}\\
Z_a &:= \{\omega \in Y^{(k)}_\infty \mid \Delta_a(\omega) \neq 0\}.
\end{align*}
As in the case of the $k$-factor model, it suffices to show that $Z_a$
is Noetherian under a suitable subgroup of $G_\infty$ stabilising it. To
this end, write $\Delta_a=\Delta[(w_i)_i, (w'_j)_j]$ for $k$ finitary
words $w_i$ and $k$ finitary words $w'_j$ with $\supp(w_i) \cap
\supp(w_j')=\emptyset$ for all $i,j$. Set $n:=\max \left( \bigcup_i
\supp(w_i) \cup \bigcup_j \supp(w'_j)\right)$ and observe that $\Delta_a$
is fixed by $H:=\{\pi \in \Sym(\infty) \mid
\pi|_{[n]}=1_{[n]}\}$, and
hence $Z_a$ is stabilised by $H$. We claim that $Z_a$ is $H$-Noetherian.
To prove this, let $J$ be the set of finitary words $w$ with $|\supp(w)
\setminus [n]| \leq 1$. In particular, all variables appearing in
$\Delta_a$ are in $J$. Let $Q$ be the open subset of $K^J$ where
$\Delta_a$ is non-zero. By Theorem~\ref{thm:kbyNmatrices} and the fact that adding
finitely many $H$-fixed variables to an $H$-Noetherian ring preserves
$H$-Noetherianity, the coordinate ring of $Q$ is $H$-Noetherian---here the
crucial point is that ``only one index runs off to infinity''.  We claim
that the projection $\pr:Z_a \to Q$ mapping a point to its coordinates
labelled by $J$ is an $H$-equivariant, closed embedding.
Equivariance is immediate. To see that $\pr$
is injective, we prove that on $Z_a$ any variable $x_w$
has an expression in terms of the variables labelled by $J$. We proceed by
induction on the cardinality of $\supp(w) \setminus [n]$. For cardinality
$0$ and $1$ the word $w$ lies in $J$ and we are done. So assume that the
cardinality is at least $2$ and that the statement is true for all
smaller cardinalities. Then we can split $w$ as $u+u'$ where 
$\supp(u) \cap \supp(u'),$ $\supp(u) \cap \supp(w'_j),$ and
$\supp(w_i) \cap
\supp(u')$ are all empty and where both $\supp(u)$ and $\supp(u')$
contain at least one element of $\NN \setminus [n]$. Then on $Z_a$ we
have 
\[ 0=\Delta[(w_1,\ldots,w_{k},u),(w'_1,\ldots,w'_{k},u')]
=\Delta_a \cdot x_{u+u'} - E  \]
where $E$ is an expression involving only variables whose supports
contain fewer elements of $\NN \setminus [n]$ than $\supp(w)$ does.  By the
induction hypothesis, these may be expressed in the variables labelled by
$J$, and as $\Delta_a$ is non-zero on $Z_a$, so can $x_{u+u'}=x_w$. To show
that $\pr$ is a closed embedding we note that the map $K[Q] \to K[Z]$ is
surjective: there is an expression for $E|_Z$ involving only $J$-labelled
variables, and dividing by $\Delta_a$ yields such an expression for
$x_w$. 

We conclude that $Z$ has the topology of a closed $H$-stable subspace
of $K^Q$ and is hence $H$-Noetherian. By Proposition~\ref{prop:ZG}
and the fact that finite unions of equivariantly Noetherian spaces
are equivariantly Noetherian, we find that $Y^{(k)}_\infty$ is
$G_\infty$-Noetherian. This concludes the proof of the theorem.

An important final remark is in order here: our proof of the theorem
shows that $X_\infty$ is defined by finitely many $G_\infty$-orbits of
equations, which is stronger than the theorem claims. In particular,
this stronger statement can be used to show that for each fixed
$\Delta$-variety there is a polynomial-time membership test. On the
other hand, it is typically {\em not} true that the ideal of $X_\infty$
is generated by finitely many $G_\infty$-orbits of polynomials; indeed,
this statement is already false for the cone over Segre. How to reconcile
this with the aforementioned conjecture \cite{Snowden10} that an ideal-theoretic
version of theorem should hold?  Well, by pulling back equations along
elements of $G_\infty$, we are implicitly pulling back equations along
tensor products of linear maps, and along permutations of tensor factors,
and along contractions, and along tensoring with $e_0$, but not along
flattening maps (though we did use, in the proof, that $\bX$ was closed
under flattening). This additional source of linear maps along which
to pull back equations may allow for an ideal-theoretic version of
the theorem.  For details see \cite{Snowden10,Draisma11d}.

\bibliographystyle{alpha}


\end{document}